\newcommand{\thickhline}{%
    \noalign {\ifnum 0=`}\fi \hrule height 1pt
    \futurelet \reserved@a \@xhline
}
\newcolumntype{t}{@{\hskip\tabcolsep\vrule width 1pt\hskip\tabcolsep}}
\newcommand*{\LargerCdot}{{\raisebox{0ex}{\scalebox{2}{$\cdot$}}}}
\newcommand{\vol}{{\rm vol}}
\newcommand{\ord}{{\rm ord}}
\newcommand{\fm}{\mathfrak{m}}
\newcommand{\fa}{\mathfrak{a}}
\newcommand{\cO}{\mathcal{O}}
\newcommand{\bR}{\mathbb{R}}
\newcommand{\bC}{\mathbb{C}}
\newcommand{\bZ}{\mathbb{Z}}
\newcommand{\lct}{{\rm lct}}
\newcommand{\wt}{{\rm wt}}
\newcommand{\len}{{\rm length}}
\newcommand{\Val}{{\rm Val}}
\newcommand{\hvol}{{\widehat{\rm vol}}}
\newcommand{\bx}{{\bf x}}
\newcommand{\Vol}{{\rm Vol}}
\newcommand{\hV}{{\widehat{V}}}
\newcommand{\bQ}{{\mathbb{Q}}}
\newcommand{\bP}{{\mathbb{P}}}
\newtheorem{thm}{Theorem}[section]
\newtheorem{lem}[thm]{Lemma}
\newtheorem{cor}[thm]{Corollary}
\newtheorem{defn}[thm]{Definition}
\newtheorem{prop}[thm]{Proposition}
\newtheorem{conj}[thm]{Conjecture}
\newtheorem{rem}[thm]{Remark}
\newtheorem{examp}[thm]{Example}
\begin{document}

\title{Minimizing normalized volumes of valuations}
\author{Chi Li}

\maketitle{}

\abstract{
For any $\bQ$-Gorenstein klt singularity $(X,o)$, we introduce a normalized volume function $\hvol$ that is defined on the space of real valuations centered at $o$ and consider the problem of minimizing $\hvol$. We prove that the normalized volume has a uniform positive lower bound by proving an Izumi type estimate for any $\bQ$-Gorenstein klt singularity. Furthermore, by proving a properness estimate, we show that the set of real valuations with uniformly bounded normalized volumes is compact, and hence reduce the existence of minimizers for the normalized volume functional 
$\hvol$ to a conjectural lower semicontinuity property. We calculate candidate minimizers in several examples to show that this is an interesting and nontrivial problem. In particular, by using an inequality of de-Fernex-Ein-Musta\c{t}\u{a}, we show that the divisorial valuation associated to the exceptional divisor of the standard blow up is a minimizer of $\hvol$ for a smooth point. Finally the relation to Fujita's work on divisorial stability is also pointed out. }

\tableofcontents

\section{Introduction and main results}

This paper is motivated by the following question:
\vskip 1mm
{\it
Given a $\bQ$-Gorenstein klt singularity $(X, o)$, is there an optimal way of rescaling it to obtain an affine cone that has a canonical metric structure?
}
\vskip 1mm

This will be made more precise later, and here we just give some rough explanation. A rescaling in the above question will be represented by a real valuation centered at the singularity $o\in X$ and the affine cone is given by the spectrum of the associated graded ring (assuming that the latter is finitely generated). To define the ``optimal way", we first introduce the normalized volume function that is defined 
on $\Val_{X,o}$, the space of real valuations that are centered at $o\in X$:
\begin{equation}
\hvol(v)=A(v)^n \vol(v), \text{ for any } v\in \Val_{X,o}.
\end{equation} 
Here $A_X(v)$ denotes the log-discrepancy of $v$ and $\vol(v)$ denotes the volume of $v$. We refer to Section \ref{secval} for their definitions. Then we seek for the minimizer of $\hvol(v)$ among all real valuations that are centered at $o$.

This minimization question is motivated by the recent study of K\"{a}hler-Einstein metrics (we will refer to section \ref{secGH} for some discussion of the study that motivates our problem). 
The transition to the current purely algebro-geometric question is however inspired by a corresponding volume minimization phenomenon in Sasaki-Einstein geometry that was discovered by Martelli-Sparks-Yau in \cite{MSY08} (see also \cite{CS12}), which in some sense is a special case of the general procedure discussed here (cf. Section \ref{secSG} and \cite{LL16}). A key observation that leads to the much more general setting is that a normalization involving weights of holomorphic volume forms in Sasaki-Einstein geometry can be equivalently rephrased by using the log discrepancy of valuations (see Section \ref{secSG}).


In this paper we shall introduce the basic set-up of this minimization problem, illustrate it by motivating examples and prove estimates that will be useful for further developments. 

A basic estimate we prove is stated in the following theorem.

\begin{thm}\label{proper}
Assume $(X,o)$ is a $\bQ$-Gorenstein klt singularity.
There exists a positive constant $K=K(X,o)>0$ such that for any valuation $v$ centered at $o\in X$ with $A_X(v)<+\infty$, the following inequality holds:
\begin{equation}\label{properineq}
A_X(v)^n\cdot \vol(v)\ge K \frac{A_X(v)}{v(\fm)},
\end{equation}
where $\fm$ is the maximal ideal defining $o\in X$.
\end{thm}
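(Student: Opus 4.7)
The inequality~(\ref{properineq}) is invariant under the rescaling $v\mapsto\lambda v$ for $\lambda>0$ (both sides transform by the same power of $\lambda$), so I would normalize $v(\fm)=1$, reducing the claim to $A_X(v)^{n-1}\vol(v)\ge K$ for a uniform constant $K=K(X,o)>0$.

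The principal tool is an Izumi-type estimate for the klt singularity $(X,o)$: there is a constant $C=C(X,o)>0$ such that every valuation $v$ centered at $o$ with $A_X(v)<\infty$ satisfies
\[
v\;\le\; C\,A_X(v)\,\ord_\fm \quad\text{on } \cO_{X,o}.
\]
I would establish this by passing to a log resolution $\pi\colon Y\to X$ of $\fm\cdot\cO_X$; the klt hypothesis forces the log discrepancies $A_X(E_i)>0$ of all exceptional divisors to be uniformly positive, and the classical Izumi--Rees inequality on the smooth ambient $Y$ then transfers back to $X$ with a finite constant. Specializing to $\fm$ gives $v(\fm)\le C A_X(v)$, hence $A_X(v)\ge 1/C$ in the normalization. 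At the level of valuation ideals $\fa_p(v):=\{f : v(f)\ge p\}$, Izumi forces $\fa_p(v)\subseteq \fm^{\lceil p/(CA_X(v))\rceil}$, and Hilbert--Samuel asymptotics yield the basic volume lower bound
\[
\vol(v)\;\ge\;\frac{e(\fm)}{C^n A_X(v)^n},\qquad\text{equivalently}\qquad A_X(v)^n\vol(v)\;\ge\;\frac{e(\fm)}{C^n}.
\]
This already settles (\ref{properineq}) on every sublevel set $\{A_X(v)\le M\}$ with a constant $K$ depending on $M$.

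The main obstacle is the non-compact regime $A_X(v)\to\infty$ (with $v(\fm)=1$), where the basic Izumi bound decays by a factor of $A_X(v)$ too fast. To close the argument one needs a sharper estimate of the form $\vol(v)\ge K'/A_X(v)^{n-1}$. I would derive it by reducing, via the density of quasi-monomial valuations in $\{A_X<\infty\}$ and the continuity of $A_X$, $\vol$, and $v\mapsto v(\fm)$, to quasi-monomial $v$ specified by weights $\alpha=(\alpha_i)$ on components of an SNC divisor on a log resolution---a setting in which the three functionals become explicit expressions in $\alpha$ and the log discrepancies $A_X(E_i)$. The required bound then reduces to an AM--GM-type inequality on the weight simplex, modelled on the smooth-point prototype
\[
\Bigl(\sum_i a_i\Bigr)^{\!n-1}\min_j a_j\;\Big/\;\prod_i a_i\;\ge\;(n-1)^{n-1},
\]
with the klt version picking up a constant controlled by the $A_X(E_i)$. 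Rigorously carrying out this AM--GM step on a general log resolution of a klt singularity, and verifying that the quasi-monomial reduction preserves the inequality in the limit, is the delicate part of the argument.
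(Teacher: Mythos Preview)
Your overall framing is right: after normalizing $v(\fm)=1$, the Izumi estimate gives $A_X(v)\ge 1/C$ and the crude inclusion $\fa_p(v)\subset\fm^{\lceil p/(CA_X(v))\rceil}$ yields $A_X(v)^n\vol(v)\ge e(\fm)/C^n$, which, as you say, is one power of $A_X(v)$ short. The question is how to recover that missing power, and here your proposal has two genuine gaps.

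First, the reduction to quasi-monomial valuations by ``density and continuity of $\vol$'' is not available. Continuity of $\vol$ is known only on ${\rm QM}(Y,D)$ for a \emph{fixed} log smooth model $(Y,D)$; there is no continuity (even lower semicontinuity) of $\vol$ across models or on all of $\Val_{X,o}$ that you could invoke. Passing to a limit of quasi-monomial valuations can drop the volume, so an inequality proved only on each ${\rm QM}(Y,D)$ does not propagate. Second, even restricted to ${\rm QM}(Y,D)$ for a resolution of a singular $(X,o)$, the functional $\vol(v)$ is \emph{not} an explicit expression in the weights $\alpha_i$ and the $A_X(E_i)$: unlike the smooth-point monomial case where $\vol=1/\prod\alpha_i$, on a general log resolution the volume depends on the full intersection theory of the exceptional divisors, and there is no AM--GM substitute. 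So the ``delicate part'' you flag is not merely delicate; the proposed mechanism does not go through.

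The paper bypasses both issues by proving a volume lower bound valid for \emph{every} $v\in\Val_{X,o}$ directly, without any reduction to quasi-monomial valuations:
\[
\vol(v)\;\ge\;c_2\Bigl(\sup_{\fm}\frac{v}{\ord_o}\Bigr)^{1-n}\frac{1}{v(\fm)},\qquad c_2=2^{-n}e(\fm).
\]
The proof is a concrete dimension count: with $v(\fm)=1$, pick $g\in\fm$ with $v(g)=1$, set $s=\lceil\sup_\fm v/\ord_o\rceil$, choose bases $\{u^{(p)}_i\}$ of $\fm^p/\fm^{p+1}$, and show that the classes of $g^{r-l}u^{(\lfloor l/s\rfloor)}_i$ for suitable $l$ are linearly independent in $R/\fa_{r+1}$. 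This produces roughly $(s-1)\sum_{p<r/s}d_p\sim e(\fm)(s-1)s^{-n}r^n/n!$ independent elements, hence $\vol(v)\ge e(\fm)(s-1)s^{-n}\ge \tfrac12 e(\fm)s^{1-n}$. Combining this with Izumi in the form $\sup_\fm v/\ord_o\le c_1 A_X(v)$ gives
\[
A_X(v)^n\vol(v)\;\ge\;c_2\Bigl(\sup_\fm\frac{v}{A_X(v)\ord_o}\Bigr)^{1-n}\frac{A_X(v)}{v(\fm)}\;\ge\;c_2\,c_1^{1-n}\,\frac{A_X(v)}{v(\fm)},
\]
which is exactly \eqref{properineq} with $K=c_2 c_1^{1-n}$. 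The point is that the skewness $\sup_\fm v/\ord_o$, not $A_X(v)$ itself, is the quantity that controls $\vol(v)$ from below by an elementary argument; Izumi is then used a second time, now to compare skewness with $A_X(v)$.
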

The infimum of the left-hand-side in \eqref{izuvol} is a new invariant of the klt singularity 
and seems interesting to be studied further. 
To prove this estimate, there are two main ingredients that are of independent interests. The first is an Izumi-type estimate which generalizes a well-known estimate for smooth points:
\begin{thm}[=Theorem \ref{Izumi}]\label{Thm-Izumi}
Let $(X,o)$ be a $\bQ$-Gorenstein klt singularity.
Then there exists a constant $c_1=c_1(X,o)>0$ such that
\begin{equation}\label{kltizu}
v(\fm)\ord_o\le v\le c_1 \cdot A_X(v)\ord_o,
\end{equation}
for any valuation $v$ centered at $0$. As a consequence, there is a uniform lower bound $\hvol(v)\ge \mathfrak{e}(\mathfrak{m})/c_1^n>0$ for any $v\in \Val_{X,o}$, where $e(\fm)$ is the Hilbert-Samuel multiplicity of $\fm$.
\end{thm}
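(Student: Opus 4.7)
The lower inequality $v(\fm)\ord_o\le v$ is elementary: for any $f\in\cO_{X,o}$ with $\ord_o(f)=k$, one has $f\in\fm^k$, hence $v(f)\ge v(\fm^k)=k\cdot v(\fm)=v(\fm)\ord_o(f)$. This disposes of the left-hand inequality with no use of the klt hypothesis.

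The upper inequality $v\le c_1 A_X(v)\ord_o$ is the substantive content, and my plan is to reduce to divisorial valuations and then combine a classical Izumi-type estimate on a log resolution of $\fm$ with the klt hypothesis. First I would invoke density of divisorial valuations in $\Val_{X,o}$ together with lower semi-continuity of $A_X$ and continuity of the evaluations $v\mapsto v(f)$ and $v\mapsto v(\fm)$, so that it suffices to prove the inequality for divisorial $v$ with a constant $c_1=c_1(X,o)$ depending only on the germ.

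For a divisorial valuation $v=c\cdot\ord_E$ I would fix a log resolution $\pi:Y\to X$ of $\fm$ (not depending on $v$), writing $\pi^{-1}\fm\cdot\cO_Y=\cO_Y(-\sum b_i F_i)$ and $K_{Y/X}=\sum(a_i-1)F_i$ with $a_i=A_X(F_i)>0$ by klt. The classical Izumi theorem on the analytically irreducible ring $\cO_{X,o}$ applied to each $\ord_{F_i}$ gives constants $C_i=C_i(\pi)$ with $\ord_{F_i}(f)\le C_i\ord_o(f)$. Passing if necessary to a further model where $E$ appears, the smooth-point version of Izumi at the center $q$ of $v$ on $Y$ (namely $v\le A_Y(v)\ord_q$ on $\cO_{Y,q}$, verified by direct monomial expansion for quasi-monomial valuations and extended by density) together with the relative log-discrepancy identity $A_X(v)=A_Y(v)+v(K_{Y/X})$ will yield the desired bound on $v(\pi^*f)$ in terms of $A_X(v)$ and $\ord_o(f)$; the klt positivity $a_i>0$ is precisely what guarantees that the correction $v(K_{Y/X})$ can be absorbed into a single constant.

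The main obstacle I anticipate is exactly this uniformity of $c_1$: for different divisorial $v$ one in principle needs different further blowups of the fixed resolution, and the correction $v(K_{Y/X})$ could spoil the bound were any $a_i\le 0$, so the klt hypothesis is indispensable and must be invoked carefully on the combinatorial data of the resolution. Once the inequality $v\le c_1 A_X(v)\ord_o$ is in hand, the final assertion $\hvol(v)\ge\fe(\fm)/c_1^n$ will follow by a volume comparison: the inequality implies $\fa_k(v)\subseteq\fm^{\lceil k/(c_1 A_X(v))\rceil}$ for the valuation ideals $\fa_k(v)=\{f:v(f)\ge k\}$, so $\dim_{\bC}(\cO_{X,o}/\fa_k(v))\ge\dim_{\bC}(\cO_{X,o}/\fm^{\lceil k/(c_1 A_X(v))\rceil})$, and passing to the Hilbert--Samuel limit gives $\vol(v)\ge\fe(\fm)/(c_1 A_X(v))^n$, hence $\hvol(v)=A_X(v)^n\vol(v)\ge\fe(\fm)/c_1^n>0$.
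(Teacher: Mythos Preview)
Your approach is essentially the paper's: same elementary lower bound, same reduction to divisorial valuations, same passage to a fixed log resolution, same smooth-case Izumi $v\le A_Y(v)\ord_q$ at the center $q$ on $Y$, same discrepancy identity $A_X(v)=A_Y(v)+v(K_{Y/X})$, and the same volume comparison for the consequence. You also correctly identify the uniformity of $c_1$ as the crux.

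Two places in your sketch are exactly where the paper supplies the missing arguments, and you should be aware that neither is automatic. First, ``the klt positivity $a_i>0$ guarantees that $v(K_{Y/X})$ can be absorbed'' needs the explicit inequality $A_Y(v)\le \epsilon^{-1}A_X(v)$; the paper obtains this by writing $B'=-K_{Y/X}$, noting klt gives $B'\le(1-\epsilon)B'_{\rm red}$, and using that $(Y,B'_{\rm red})$ is log canonical (SNC) so that $A_Y(v)-v(B'_{\rm red})\ge 0$, whence
\[
A_X(v)=\epsilon A_Y(v)+(1-\epsilon)\bigl(A_Y(v)-v(B'_{\rm red})\bigr)+\bigl((1-\epsilon)v(B'_{\rm red})-v(B')\bigr)\ge \epsilon A_Y(v).
\]
Second, applying classical Izumi to each $\ord_{F_i}$ gives constants $C_i$ only for those finitely many divisorial valuations, not for $\ord_q$ at an arbitrary closed point $q\in\mu^{-1}(o)$; what is actually needed is the \emph{uniform} bound $\ord_{q}(\mu^*f)\le a_2\,\ord_o(f)$ valid for all $q$ in the compact fiber, which the paper isolates as a separate theorem and proves either via Tougeron's Jacobian argument or via an intersection-theoretic comparison reducing $\ord_q$ to some $\ord_{F_i}$ through $q$. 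Your sketch gestures at the ingredients but does not bridge $\ord_{F_i}$ to $\ord_q$, and without that step the bound is not uniform in $v$.
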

The other ingredient is the following more technical estimate, which is related to a volume formula of Favre-Jonsson for $v\in \Val_{\bC^2, 0}$.
\begin{thm}[=Theorem \ref{mainthm}]
Assume $(X,o)$ is a $\bQ$-Gorenstein klt singularity.
There exists a positive constant $c_2=c_2(X,o)>0$ such that for any valuation centered at $o$, the following inequality holds:
\begin{equation}\label{keyineq}
\vol(v)\ge c_2 \left(\sup_{\fm}\frac{v}{\ord_o}\right)^{1-n}\frac{1}{v(\fm)}.
\end{equation}
\end{thm}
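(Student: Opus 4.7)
By the homogeneity of \eqref{keyineq} under $v \mapsto \lambda v$ for $\lambda > 0$, one may normalise so that $v(\fm) = 1$, and the inequality reduces to $\vol(v) \ge c_2 \gamma^{1-n}$ with $\gamma := \sup_\fm v/\ord_o$. By the definition of $\gamma$, $\fa_k(v) \subseteq \fm^{\lceil k/\gamma\rceil}$, which via Hilbert--Samuel asymptotics gives the naive estimate $\vol(v) \ge \fe(\fm)/\gamma^n$ -- short of the target by a factor of $\gamma$. The guiding extremal example is the monomial valuation on $(\bC^n,o)$ with weights $(1,\gamma,\ldots,\gamma)$, for which a direct lattice-point count gives $\vol(v) = 1/\gamma^{n-1}$ exactly; this indicates both the sharpness of the target and that the missing $\gamma$ should come from slicing along a direction on which $v$ attains its minimum.

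Pick a generic element $x \in \fm$ simultaneously satisfying $v(x) = v(\fm) = 1$, $\ord_o(x) = 1$, non-zero-divisor in the domain $\cO := \cO_{X,o}$, and superficial for $\fm$ (so $\fe(\fm/(x)) = \fe(\fm)$). Each is an open generic condition on $\fm$ modulo higher-order terms, and they are mutually compatible since $v(\fm)$ is attained on some minimal generator of $\fm$, $\cO$ is a domain (klt implies normal), and $\bC$ is infinite. Using $v(x) = 1$ one has $(x) \cap \fa_k(v) = x \cdot \fa_{k-1}(v)$; combining this with the short exact sequence $0 \to (x)/((x) \cap \fa_k(v)) \to \cO/\fa_k(v) \to \cO/((x) + \fa_k(v)) \to 0$ and iterating via multiplication by $x$ yields the clean identity
\[
\dim_\bC \cO/\fa_k(v) \;=\; \sum_{i=1}^{k} \dim_\bC (\cO/(x))/\fb_i, \qquad \fb_i := (\fa_i(v) + (x))/(x).
\]
The containment $\fa_i(v) \subseteq \fm^{\lceil i/\gamma\rceil}$ descends to $\fb_i \subseteq (\fm/(x))^{\lceil i/\gamma\rceil}$ in the $(n-1)$-dimensional local ring $\cO/(x)$, so each slice has size $\gtrsim \fe(\fm)(i/\gamma)^{n-1}/(n-1)!$; summing in $i$ gives $\dim_\bC \cO/\fa_k(v) \gtrsim \fe(\fm)\,k^n/(n!\,\gamma^{n-1})$, whence $\vol(v) \ge \fe(\fm)\,\gamma^{1-n}$, and one may take $c_2 = \fe(\fm)$.

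The main obstacle is the joint genericity of $x$, in particular the requirement $v(x) = v(\fm)$, which depends on $v$; this is verified by observing that $v(\fm)$ is attained on a minimal generator, so $v(\sum c_i g_i + h) = v(\fm)$ whenever $c_i \in \bC^*$ and $h \in \fm^2$, giving an open dense set of lifts. A secondary subtlety is that $\cO/(x)$ need not be reduced or klt for singular $(X,o)$, but only the Hilbert--Samuel asymptotic of $\fm/(x)$ in $\cO/(x)$ is used, which remains valid by Northcott--Rees. Izumi's estimate (Theorem~\ref{Thm-Izumi}) is not used directly in the slicing step, but it is what ensures $\gamma$ is finite in the first place, making the normalisation $v(\fm) = 1$ yield a meaningful inequality for all $v \in \Val_{X,o}$ with $A_X(v) < \infty$.
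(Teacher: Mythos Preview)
Your argument is correct and takes a genuinely different route from the paper. The paper picks $g\in\fm$ with $v(g)=v(\fm)=1$ and then \emph{constructs} explicit linearly independent elements in $R/\fa_{r+1}$ of the form $g^{r-l}u^{(p)}_i$, where $\{u^{(p)}_i\}$ is a basis of $\fm^p/\fm^{p+1}$; a combinatorial table organises the indices and linear independence is checked by hand, leading to $c_2=2^{-n}e(\fm)$. You instead slice: the same choice of an element $x$ with $v(x)=1$ (refined to be superficial) yields the exact telescoping identity $\dim_\bC\cO/\fa_k=\sum_{i=1}^k\dim_\bC(\cO/(x))/\fb_i$, and then you bound each slice via $\fb_i\subseteq(\fm/(x))^{\lceil i/\gamma\rceil}$ using only the Hilbert--Samuel polynomial of the $(n-1)$-dimensional quotient. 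Both proofs rest on the same pivot --- an element realising $v(\fm)$ --- but your approach is more structural, avoids the linear-independence bookkeeping, and yields the sharper constant $c_2=e(\fm)$ (the paper loses a factor $2^{-n}$ passing through the integer $s=\max\{2,\lceil\gamma\rceil\}$). The price is the invocation of superficial elements and $e(\fm/(x))=e(\fm)$, which the paper's bare-hands count avoids. One small point worth making explicit in a write-up: although the lower-order terms in the Hilbert--Samuel function of $\cO/(x)$ depend on $x$ (hence on $v$), they are $o(k^n)$ for each fixed $v$ and so disappear in the limit defining $\vol(v)$; only the leading coefficient $e(\fm)$ needs to be uniform, and superficiality guarantees exactly that.
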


The estimate in Theorem \ref{proper} strongly suggests that the minimizer should exist. In fact, Theorem \ref{proper} reduces the existence to the lower semicontinuity of $\hvol$ on $\Val_{X,o}$ (see Corollary \ref{maincor}). We make the following
\begin{conj}\label{conj-existence}
For any $\mathbb{Q}$-Gorenstein klt singularity $(X, o)$, there exists a {\it unique} minimizing valuation of $\hvol$ on $\Val_{X,o}$.
\end{conj}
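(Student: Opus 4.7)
The plan is to attack Conjecture \ref{conj-existence} by separating existence from uniqueness; the former should follow from the estimates already in hand once lower semi-continuity of $\hvol$ is established, while the latter is where the main difficulty lies.

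For existence, I would first exploit the homogeneity $\hvol(\lambda v) = \hvol(v)$ for $\lambda > 0$ to normalize any minimizing sequence $\{v_k\}$ by setting $v_k(\fm) = 1$. Theorem \ref{proper} then gives $\hvol(v_k) \geq K \cdot A_X(v_k)$, so the log discrepancies are uniformly bounded along the sequence, say $A_X(v_k) \leq M$. The Izumi-type Theorem \ref{Thm-Izumi} then sandwiches the $v_k$ between two positive multiples of $\ord_o$: the lower bound $v_k(\fm) \cdot \ord_o \leq v_k$ becomes $\ord_o \leq v_k$ under our normalization, while the upper bound gives $v_k \leq c_1 M \cdot \ord_o$. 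A set of valuations trapped this way is precompact in the weak topology on $\Val_{X,o}$ in which $v_k \to v$ means $v_k(f) \to v(f)$ for every $f \in \cO_{X,o}$, since each function $v \mapsto v(f)$ then takes values in a bounded interval. By a standard diagonal argument one extracts a limit $v_\infty \in \Val_{X,o}$, which still satisfies $v_\infty(\fm) = 1$ and $A_X(v_\infty) \leq M$ by lower semi-continuity of $A_X$ (known by approximation via divisorial valuations on log resolutions).

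It then remains to show $\hvol$ is lower semi-continuous at $v_\infty$. The problematic factor is $\vol$: writing $\vol(v) = \lim_{k \to \infty} \frac{n!}{k^n} \dmk (\cO_{X,o}/\fa_k(v))$ for the graded sequence $\fa_k(v) = \{f : v(f) \geq k\}$, the idea is to deduce lower semi-continuity of $\vol$ from upper semi-continuity of Hilbert-Samuel colengths in flat families, using the uniform Izumi control $\ord_o \leq v_k \leq c_1 M \cdot \ord_o$ to make the limit in $k$ uniform in the sequence. This is exactly the conjectural input flagged by the author after Theorem \ref{proper}, and once it is in place existence is immediate from the compactness argument above.

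For uniqueness, I expect the main obstacle. While $A_X$ is linear under rescaling, the function $\vol$ is only concave-like in natural parameters (as in the Newton-Okounkov picture), so $\hvol$ carries no evident convexity to invoke directly on $\Val_{X,o}$. The most promising route is to identify any minimizer $v$ with one inducing a K-semistable degeneration of $(X,o)$ to an affine cone $C_v$, paralleling the Martelli-Sparks-Yau Reeb-vector minimization in the Sasaki-Einstein setting; uniqueness of the minimizing valuation would then be extracted from a Bando-Mabuchi-type rigidity for the canonical cone metric on the limit. This is the structural step that I expect to require substantial new ideas beyond the Izumi and properness estimates of the present paper, and it is plausibly where the conjecture will stand or fall.
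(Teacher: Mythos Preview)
The statement you are attempting is a \emph{conjecture}, not a theorem: the paper does not prove it, and explicitly flags that ``Conjecture \ref{conj-existence} is non-trivial even for the smooth case.'' So there is no proof in the paper to compare against.

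That said, your existence argument is essentially the content of Corollary \ref{maincor}: normalize to $v(\fm)=1$, use Theorem \ref{proper} to bound $A_X$, invoke compactness of $\{v : v(\fm)=1,\ A_X(v)\le M\}$ as in \cite[Proposition 5.9]{JM10}, and conclude \emph{conditionally} on lower semicontinuity of $\hvol$. The paper isolates exactly this missing input as {\bf Hypothesis C}, and your sketch of how to get it (uniform Izumi control plus upper semicontinuity of colengths) is plausible but is not carried out here; indeed the Postscript Note records that Blum \cite{Blu16} later established existence by a different route, \emph{without} verifying lower semicontinuity of $\hvol$.

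Your uniqueness discussion is honest speculation, not a proof. The route you propose---identifying a minimizer with a valuation whose graded ring degenerates to a K-semistable cone, then invoking a rigidity statement on that cone---is precisely the structural picture the paper conjectures (see Conjecture \ref{mainconj}, items 4 and 5) and pursues in \cite{Li15b, LL16}. But as the Postscript Note states, uniqueness (and the quasi-monomial property of the minimizer) remained open at the time of writing. So your proposal correctly locates the difficulties and aligns with the paper's own programme, but it does not, and at this stage cannot, constitute a proof.
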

We point out that the Conjecture \ref{conj-existence} is non-trivial even for the smooth case. By using an inequality of de-Fernex-Ein-Musta\c{t}\u{a}, we will prove in Proposition \ref{prop-sm} that the exceptional divisor of the standard blow up is a minimizer. In a following paper \cite{Li15b}, we will study the case when $X$ is an affine cone over a K-semistable $\mathbb{Q}$-Fano variety.

Finally by deriving the volume formula for specific divisorial valuations, we point out that there is a close relation of our minimization problem to the work of Fujita on divisorial semi-stability. 

We end this introduction by outlining the organization of the paper. In the next section, we recall the definition of volumes and log discrepancies for valuations and explain a key observation on Sasaki-Einstein geometry that leads to our formulation of the problem. In section \ref{SecIzumi}, we prove the Izumi type estimate stated in Theorem \ref{Thm-Izumi}. In section \ref{sec-minhvol}, we prove Theorem \ref{proper} by proving the technical estimate in Theorem \ref{mainthm}. In Section \ref{SecExmp}, we discuss several examples including the case of smooth point. In section \ref{secFujita}, we point out the relation between the minimizations of normalized volumes and Fujita's work. In section \ref{sec-conj} we propose several conjectures that are natural from our point of view. In Appendix I, we write down the candidate minimizer for $D$-type and $E$-type singularities. In Appendix II, following the suggestion of a referee, we give an alternative proof of Theorem \ref{thm-UIzumi} by using the argument of Boucksom-Favre-Jonsson.


\section{Preliminaries}\label{secpre}

\subsection{Volumes and log discrepancies of valuations}\label{secval}

Let $X^n$ be an $n$-dimensional normal affine variety. Fix a closed point $o\in X$ and let $R:=\mathcal{O}_{X,o}$ be the local ring of $X$ at $o$ with the maximal ideal denoted by $\fm$.  Let $\Val_{X,o}$ denote the space of all real valuations on $\bC(X)$ with center $o$ on $X$. For any $v\in \Val_{X,o}$, we define the volume of $v$ following Ein-Lazarsfeld-Smith (see \cite{ELS03}):
\begin{equation}\label{defvol}
{\rm vol}(v)=\limsup_{r\rightarrow +\infty}\frac{{\rm length}_R(R/\fa_r)}{r^n/n!},
\end{equation}
where $\fa_r=\{f\in R; v(f)\ge r\}$.
By \cite{ELS03, Mus02, Cut12}, we know that:
\begin{equation}\label{vol=mult}
\vol(v)=e(\fa_{\LargerCdot}):=\lim_{r\rightarrow +\infty}\frac{e(\fa_r)}{r^n}.
\end{equation} 
Here the Hilbert-Samuel multiplicity $e(\fa_r)$ is defined as follows:
\[
e(\fa_r)=\lim_{p\rightarrow +\infty}\frac{\len(R/\fa_r^p)}{p^n/n!}.
\]


From now on, we will restrict our attention to the class of $\mathbb{Q}$-Gorenstein klt singularities. Recall the following standard definitions:
\begin{defn}
\begin{enumerate}
\item
$(X, o)$ is $\mathbb{Q}$-Gorenstein if there exists an integer $m\in \mathbb{Z}$ such that $mK_X$ is a Cartier divisor near $o\in X$.
\item
Let $(X,o)$ be a $\mathbb{Q}$-Gorenstein singularity.
$(X, o)$ is klt (Kawamata log terminal) if for any divisor $E$ over $(X,o)$, we have $A_X(E)>0$.
\end{enumerate}
\end{defn}

Following \cite{JM10} and \cite{BFFU13}, we briefly recall the definition of log discrepancy function $A(v)=A_X(v)$
for any valuation centered at $o$: $v\in \Val_{X,o}$. If $v=\ord_E$ is a divisorial valuation for
an exceptional divisor $E$ over $o$ such that there is a birational morphism $\pi: Y\rightarrow X$ and $E$ is a prime divisor on $Y$ that is contracted to $o\in X$, then 
we define its log discrepancy as the coefficient of $E$ in $K_{Y/X}+E$:
\[
K_Y+E=\pi^*K_X+A(E)E+F,
\]
where ${\rm Supp}(F)$ does not contain $E$.
In general, one first defines the log discrepancy for any quasi-monomial valuation as follows. Assume $\mu: Y\rightarrow X$ is a resolution of singularities. $X$ and $\underline{y}=(y_1,\dots, y_r)$ is a system of algebraic coordinates at a point $y\in Y$. By \cite[Proposition 3.1]{JM10}, to every $\alpha=(\alpha_1, \dots, \alpha_r)\in \mathbf{R}^{n}_{\ge 0}$ one can associate a unique valuation ${\rm val}_\alpha={\rm val}_{\underline{y},\alpha}\in \Val_X$ with the following property: whenever $f\in \mathcal{O}_{Y,\eta}$ is written in $\widehat{\mathcal{O}_{Y,\eta}}$ as $f=\sum_{\beta\in\mathbb{Z}^{r}_{\ge 0}}c_\beta y^{\beta}$, with each $c_\beta$ either zero or unit, we have
\[
{\rm val}_\alpha(f)=\min\{\langle\alpha,\beta\rangle | c_\beta\neq 0\}.
\]
The set of all such valuations (called quasi-monomial valuations or equivalently Abhyankar valuations, see \cite{ELS03}) is denoted by ${\rm QM}_\eta(Y,D)$. If $\eta$ is the generic point of a connected component of intersection of $D_1, \dots, D_r$ of $D$,  then the map
 \begin{equation}\label{quasisom}
 {\rm QM}_{\eta}(Y,D)\rightarrow\bR^r, \quad v\mapsto (v(D_1), \dots, v(D_r))
\end{equation}
 gives a homeomorphism onto the cone $\bR^{r}_{\ge 0}$ (see \cite[Lemma 4.5]{JM10}).
The log discrepancy function for such quasi-monomial valuation is defined as follows:
\begin{equation}\label{ldquasi}
A_X(v)=\sum_{i=1}^r v(D_i)\cdot A_X(\ord_{D_i})=\sum_{i=1}^r v(D_i)\cdot (1+\ord_{D_i}(K_{Y/X})).
\end{equation}
In \cite{JM10}, Jonsson-Musta\c{t}\u{a} defined $A(v)$ for any real valuation on $\bC^n$. They first showed that there is a retraction map $r: \Val_X\rightarrow {\rm QM}(Y,D)$ such that it induces a homomeomorphism
\[
r: \Val_X\rightarrow \lim_{\stackrel{\longleftarrow}{(Y,D)}}{\rm QM}(Y,D).
\]
For any real valuation $v\in \Val_{X,o}$, one can then define:
\[
A(v)=A_X(v):=\sup_{(Y,D)} A(r_{Y,D}(v)),
\]
where the supremum is over all log-smooth pairs $(Y,D)$ over $X$. Jonsson-Musta\c{t}\u{a}'s construction has been generalized to the singular case in \cite{BFFU13} by following the same scheme of approximations.

\begin{rem}
In this paper we will only work with the class of $\mathbb{Q}$-Gorenstein singularities. Notice that the log discrepancy and klt condition can be defined for all normal singularities following the work of de-Fernex-Hacon \cite{dFH09} (see also \cite{BFF12, BFFU13}) and most discussions in this paper can indeed be generalized correspondingly (pointed out to me by a referee). However to avoid technical complications we leave the generalization to future works.
\end{rem}

Notice that with the above definitions and notations,  $A(\lambda v)=\lambda A(v)$ and ${\rm vol}(\lambda v)=\lambda^{-n}{\rm vol}(v)$ for any $\lambda>0$ and $v\in \Val_{X,o}$. So the function
$A_X(v)^n\vol(v)$ is a scaling invariant function on $\Val_{X,o}$. This function is well defined if $A_X(v)<+\infty$ (see Section \ref{sec-minhvol}). 
Where there is a torus action on $(X, o)$ with $o$ being the unique attracting point, the restriction of $\hvol$ to the space of toric invariant valuations appeared in Sasaki-Einstein geometry (in a disguised form) which we will discuss next.

\subsection{Weights vs. log discrepancies}\label{secSG}
In this section, we relate the weight of holomorphic volume form to the log discrepancy of torus invariant valuation. This is the key observation that leads to our general minimization problem. 

Assume that $X={\rm Spec}(\bC[z_1,\dots, z_n]/I(X))$ is a normal affine variety over $\bC$ with a $(\mathbb{C}^*)^{r}$-action. Assume that the action is free outside $o$ and $o$ is an attracting fixed point. We have the weight decomposition:
\[
R:=\bC[z_1, \dots, z_n]/I(X)=\bigoplus_{\alpha\in \Gamma^*}R_{\alpha}.
\]
We assume that $R_\alpha\neq 0$ for any $\alpha\in \Gamma^*$.
The Reeb cone is a cone $\mathcal{C}\subset \mathfrak{t}$ consisting of all $\xi\in \mathfrak{t}\cong \bR^{r}$ such that $\alpha(\xi)>0$ for all $\alpha\in \Gamma^*$. If $\xi\in \mathcal{C}$, then one can define the index character:
\[
f(t,\xi)=\sum_{\alpha\in \Gamma^*}e^{-t\alpha(\xi)}\dim R_\alpha,
\]
which has an expansion (\cite{MSY08}, \cite{CS12}) near $t=0$:
\[
f(t,\xi)=\frac{a_0(\xi)}{t^n}+O(t^{1-n}).
\]
It's easy to see that $a_0(\xi)$ is a homogeneous function of $\xi$ of homogeneous degree of $-n$ (see \cite{MSY08}).

If $\xi$ is rational, then $e^{t\xi}$ generates a $\mathbb{C}^*$ action. $(E,\Delta)=X/\langle e^{t\xi}\rangle$ is an algebraic stack and there is an orbifold line bundle $L\rightarrow (E, \Delta)$ such that the underlying variety $X$ becomes an orbifold cone over $(E, \Delta)$. In other words we have:
\[
E={\rm Proj}\bigoplus_{m=0}^{\infty} H^0(E, mL)
\mbox{ and } X={\rm Spec}\bigoplus_{m=0}^{\infty}H^0(E, mL):=C(E,L).
\]
If $\xi=w\partial_w$ is the standard rescaling vector where $w$ is a linear coordinate along the fibre of $X\setminus\{o\} \rightarrow E$, then it's easy to verify that
$
a_0(\xi)=L^{n-1}.
$
On the other hand, any Reeb vector field $\xi$ determines a real valuation $\wt_\xi\in \Val_{X,o}$ in the following way:
\[
{\rm wt}_\xi(f)=\min_{\alpha\in \Gamma^*}\{\langle \alpha, \xi\rangle; f_{\alpha}\neq 0\} \text{ for any } f=\sum_\alpha f_\alpha\in R.
\]
\begin{lem}
$\vol({\rm wt}_\xi)=a_0(\xi)$ for $\xi=w\partial_w$.
\end{lem}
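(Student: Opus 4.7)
The plan is to unwind both sides of the equation in terms of the Hilbert function of the graded ring $R = \bigoplus_{m \geq 0} H^0(E, mL)$, and observe they agree. The key point is that when $\xi = w\partial_w$ is the standard rescaling vector field, the weight decomposition of $R$ under the associated $\mathbb{C}^*$-action coincides with the grading coming from the cone structure $X = C(E, L)$. Concretely, for $f = \sum_{m \geq 0} f_m$ with $f_m \in H^0(E, mL)$, we have $\wt_\xi(f) = \min\{m : f_m \neq 0\}$.

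Given this identification, I would compute the valuation ideals directly: for any real $r > 0$,
\begin{equation*}
\fa_r = \{f \in R : \wt_\xi(f) \geq r\} = \bigoplus_{m \geq \lceil r \rceil} H^0(E, mL),
\end{equation*}
so that
\begin{equation*}
\len_R(R/\fa_r) = \sum_{m=0}^{\lceil r \rceil - 1} \dim_{\mathbb{C}} H^0(E, mL).
\end{equation*}
Applying (orbifold) asymptotic Riemann--Roch to the polarized pair $(E, L)$, which has dimension $n-1$, gives
\begin{equation*}
\dim_{\mathbb{C}} H^0(E, mL) = \frac{L^{n-1}}{(n-1)!}\, m^{n-1} + O(m^{n-2}),
\end{equation*}
and summing over $m$ produces $\len_R(R/\fa_r) = \tfrac{L^{n-1}}{n!} r^n + O(r^{n-1})$. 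Substituting into the definition \eqref{defvol} yields
\begin{equation*}
\vol(\wt_\xi) = \lim_{r \to \infty} \frac{\len_R(R/\fa_r)}{r^n/n!} = L^{n-1},
\end{equation*}
which agrees with the value $a_0(\xi) = L^{n-1}$ already noted in the text (equivalently, one can see this by the Abelian theorem applied to the Laplace-type transform $f(t,\xi) = \sum_m e^{-tm} \dim H^0(E, mL)$ evaluated as $t \to 0^+$).

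The only mild subtlety is to make sure the Riemann--Roch asymptotic is valid in the orbifold setting, where $(E, \Delta)$ is a Deligne--Mumford stack and $L$ is an orbifold line bundle; this is standard, with the leading term still governed by the top self-intersection $L^{n-1}$ computed on the coarse moduli space. Beyond that, everything is a direct comparison of leading coefficients of Hilbert-type functions, so I do not expect any real obstacle.
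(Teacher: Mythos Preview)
Your proof is correct and follows essentially the same approach as the paper's: both compute $\dim_\bC R/\fa_r$ by summing the graded pieces $\dim H^0(E,mL)$ for $m<r$ and identifying the leading coefficient $L^{n-1}=a_0(\xi)$. The only cosmetic difference is that the paper opens with a remark that both sides are homogeneous of degree $-n$ in $\xi$ (which is superfluous here since $\xi=w\partial_w$ is already fixed), and phrases the computation via $\fa_r/\fa_{r+1}\cong H^0(E,L^r)$ rather than writing $\fa_r=\bigoplus_{m\ge\lceil r\rceil}H^0(E,mL)$ directly.
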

\begin{proof}
Since both sides are homogeneous of degree $-n$ in $\xi$. We can normalize $\xi$ such that it corresponds to the standard scaling along
the fibre of $L$. Then it's easy to see that
\[
\fa_r({\rm wt}_\xi)/\fa_{r+1}({\rm wt}_\xi)\cong H^0(X, L^r).
\]
So we can calculate:
\begin{eqnarray*}
\dim_{\bC}R/\fa_r(\wt_\xi)&=&\sum_{m=0}^{r}d_m=\sum_{m=0}^r \left(a_0(\xi)\frac{m^{n-1}}{(n-1)!}+O(m^{n-2})\right)\\
&=&a_0(\xi)\frac{r^{n}}{n!}+O(r^{n-1}).
\end{eqnarray*}
By definition of volume in \eqref{defvol}, we get the identity.
\end{proof}
From now on, we assume that $ -(K_E+\Delta)\sim_{\mathbb{Q}} r L$ for $r>0$. We relate the weights on the holomorphic $n$-form to the log-discrepancies. 
Let $\pi: Y\rightarrow X$ be the extraction (blow-up) of $E$. Then we can write:
\begin{equation}\label{bld1}
K_Y+E=\pi^*(K_X)+A_X(E)E.
\end{equation}
$\ord_E$ is a valuation on $R$ such that: $\ord_E(f)=m$ if $f\in H^0(E, -m(K_E+\Delta))$. 

In the simplest case, $X=C(E, -K_E)$. $X$ has a nonzero
vanishing $n$-form given by $\Omega=dz\wedge dw$
where $w$ is the fibre variable $w dz$. The holomorphic vector field $\xi=w\partial_w$ has the property that:
\[
\mathcal{L}_\xi\Omega=\Omega. 
\]
On the other hand, if we restrict \eqref{bld1} on $E$ we get: $K_E=A_X(E)E|_E \Rightarrow A_X(E)=1$. 
More generally, assume that $(E,\Delta)$ is a log-Fano variety such that $-(K_E+\Delta)\sim_{\mathbb{Q}} r\cdot L$. Then the cone $C(E,L)$ is a Calabi-Yau variety with nonvanishing $n$-form $\Omega$. At $w\cdot s$, we have a non-zero holomorphic volume form:
\[
\Omega= dw^{r}\wedge s^{\otimes r}=dw^r\wedge \mu_X^*dz,
\]
where $\mu_X: Y\rightarrow E$ is the projection. The standard vector $\xi=w\partial_w$ satisfies $\mathcal{L}_\xi\Omega=r\Omega$. 
On the other hand, by adjunction, we have: $K_E+\Delta_E=A_X(E) E|_E=A_X(E) (-L)|_E$. So we get $A_X(\ord_E)=A_X(E)=r$ (see \cite[Section 3.1]{Kol13}). Notice that the valuation $\ord_E$ coincides with ${\rm wt}_\xi$. 

\begin{rem}
The above discussion can be understood in the setting of the well-known Dolgachev-Pinkham-Demazure construction. In the above we just gave simplified statements to motivate our problem. 
\end{rem}

So we see that the weight of the action indeed corresponds to log discrepancy. This is also well known in the toric case. Consider a toric affine variety $X_\sigma$ defined by a polyhedral cone $\sigma\subset \mathbb{R}^n=N\otimes_{\mathbb{Z}}\mathbb{R}$ generated by
primitive vectors $v_1,\dots, v_n\in N_{\mathbb{Z}}$. Let $\sigma^{\vee}\subset M_\mathbb{R}=\left(\mathbb{R}^n\right)^{\vee}$ be the dual cone. Assume that there is a rational $\gamma\in \sigma^{\vee}\cap M_{\mathbb{Q}}\subset\left(\mathbb{R}^n\right)^{\vee}$ such that
\[
\langle \gamma, v_i\rangle=1.
\]
This is equivalent to the condition that $X_\sigma$ is $\mathbb{Q}$-Gorenstein.
For any rational point $x\in \sigma\cap N_\mathbb{Q}$, $x$ determines a valuation $v_{x}$
such that $v_x(f_y)=\langle y, x\rangle $ for any $y\in M_{\mathbb{Z}}$. On the other hand, the
toric blow up $\pi_x: Y_x\rightarrow X$ determined by $x$ is given by the division of $\sigma$ into
subcones. There is a unique exceptional divisor $E_x$ such that the log discrepancy is given by (see \cite{Bor97})
\[
A(E_x; X)=\langle \gamma, x\rangle,
\]
which is nothing but the weight of the $\Omega$ with respect to $x$ (see \cite[(49)]{FOW09}).

In the study of Sasaki-Einstein metrics, it was Martelli-Sparks-Yau who realized that the problem of minimizing $a_0(\xi)$ under the constraint $\mathcal{L}_{\xi}\Omega= n\Omega$ is important because the minimizer is the only candidate of Reeb vector field for which there could exist a Sasaki-Einstein metric (see \cite{MSY08}, \cite{GMSY07}). From the above discussion, we see that the problem is the same as minimizing the function $\hvol({\rm wt}_\xi)=A_X(\wt_\xi)^n\vol(\wt_\xi)$ among $\xi\in\mathcal{C}\subset \mathfrak{t}$. 


\section{Izumi type estimate}\label{SecIzumi}

The following inequality will be crucial for us. If $(X,o)$ is smooth, it is well known as shown in \cite[Theorem 2.6]{ELS01} and \cite[Proposition 5.1]{JM10}. 
\begin{thm}[Izumi type estimate]\label{Izumi}
Assume $(X,o)$ is a $\bQ$-Gorenstein klt singularity.
There exists a constant $c_1=c_1(X,o)>0$ such that
\begin{equation}\label{kltizu}
v(\fm)\ord_o\le v\le c_1 \cdot A_X(v)\ord_o,
\end{equation}
for any valuation centered at $0$.
\end{thm}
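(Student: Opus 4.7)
The left inequality $v(\fm)\ord_o \le v$ is immediate. For $f \in R$ with $\ord_o(f) = k$, the element $f$ lies in $\fm^k$ and thus may be written as a finite sum of $k$-fold products of elements of $\fm$; the valuation axioms then give $v(f) \ge k v(\fm) = v(\fm)\ord_o(f)$.

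The right inequality $v \le c_1 A_X(v)\ord_o$ is the substantive content, and the plan is to reduce it to the classical smooth Izumi estimate (\cite{ELS01}, \cite[Prop.~5.1]{JM10}) on a log resolution, with the klt hypothesis providing the uniformity. First, density of quasi-monomial valuations in $\Val_{X,o}$, together with the compatibility of $A_X$ with the Jonsson-Musta\c{t}\u{a} retractions (extended to singular $X$ in \cite{BFFU13}) and the $\bR_{>0}$-linearity of both sides under scaling, reduces the statement to divisorial valuations $v = \ord_E$. Next, fix a log resolution $\pi\colon Y \to X$ factoring through the blow-up of $\fm$, so that $\pi^{-1}\fm\cdot\cO_Y = \cO_Y(-F)$ with $F = \sum a_i E_i$ effective and SNC-supported, and, after further blow-ups, with $E$ appearing as one of the $E_i$. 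The identity $\ord_{E_i}(f) \ge a_i \ord_o(f)$ (from $\pi^*\fm^k \subseteq \cO_Y(-kF)$) controls $\ord_o$ in terms of the divisorial valuations on the smooth model; combined with the smooth Izumi on $Y$ comparing $\ord_E$ against the $\ord_{E_i}$'s, and with the formula $A_X(\ord_E) = 1 + \ord_E(K_{Y/X})$ (whose coefficients exceed $-1$ by klt), one obtains the desired bound.

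The main obstacle will be producing a \emph{uniform} constant $c_1 = c_1(X,o)$ as $E$ varies over all divisors over $o$. The smooth Izumi on a fixed model yields only a constant depending on that model and on $E$; packaging these constants into a single $c_1$ requires exploiting the klt positivity of $A_X$ through intrinsic invariants such as $\lct(X;\fm) > 0$, so that the estimate becomes insensitive to the choice of log resolution. Once (\ref{kltizu}) is established, the corollary on $\hvol$ is immediate: from $v \le c_1 A_X(v)\ord_o$, the monotonicity of $\vol$ under pointwise domination of valuations together with the homogeneity $\vol(\lambda v) = \lambda^{-n}\vol(v)$ yields $\vol(v) \ge (c_1 A_X(v))^{-n}\vol(\ord_o) = (c_1 A_X(v))^{-n}\fe(\fm)$, so that $\hvol(v) = A_X(v)^n\vol(v) \ge \fe(\fm)/c_1^n > 0$.
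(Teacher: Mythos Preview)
Your proposal correctly isolates the real difficulty---uniformity of $c_1$ as the divisor $E$ varies---but it does not resolve it, and the concrete steps you outline actually work against you. Two specific issues:

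\textbf{(i)} The inequality $\ord_{E_i}(f) \ge a_i\,\ord_o(f)$ (from $\pi^*\fm^k \subseteq \cO_Y(-kF)$) bounds $\ord_o(f)$ from \emph{above}, which is the wrong direction: to prove $v(f)\le c_1 A_X(v)\,\ord_o(f)$ you need a \emph{lower} bound on $\ord_o(f)$ in terms of data on $Y$.

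\textbf{(ii)} ``After further blow-ups, with $E$ appearing as one of the $E_i$'' makes the model $Y$ depend on $E$, so any constant produced by smooth Izumi on that model depends on $E$ as well. Your gesture toward $\lct(X;\fm)>0$ does not close this gap: what one would need is a Skoda-type estimate $\lct_o(f)\ge c/\ord_o(f)$ for all $f$ (from which $A_X(v)/v(f)\ge \lct(f)\ge c/\ord_o(f)$ follows immediately), but proving that inequality on a klt singularity is itself nontrivial and you have not supplied it.

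The paper's argument avoids both problems by working on a \emph{single fixed} log resolution $\mu\colon X'\to X$, independent of $v$. The key new ingredient, which your proposal is missing, is the inequality
\[
A_X(v)\ \ge\ \epsilon\, A_{X'}(v)
\]
for a uniform $\epsilon=\epsilon(X')>0$, obtained purely from the klt condition: writing $K_{X'}+B'=\mu^*K_X$ with $B'\le(1-\epsilon)B'_{\rm red}$ and using that $(X',B'_{\rm red})$ is lc gives $A_X(v)=A_{X'}(v)-v(B')\ge\epsilon A_{X'}(v)$. Now the smooth Izumi on $X'$ (at the center $\xi$ of $v$, not at $E$) gives $v(f)\le A_{X'}(v)\,\ord_\xi(\mu^*f)\le \epsilon^{-1}A_X(v)\,\ord_\xi(\mu^*f)$, and a separate uniform comparison $\ord_{o'}(\mu^*f)\le a_2\,\ord_o(f)$, valid for \emph{all} $o'\in\mu^{-1}(o)$ (Theorem~\ref{thm-UIzumi}, proved via Izumi--Tougeron or via the intersection-theoretic argument of Boucksom--Favre--Jonsson), finishes with $c_1=\epsilon^{-1}a_2$. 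The point is that both $\epsilon$ and $a_2$ depend only on the fixed morphism $\mu$, not on $v$.
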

\begin{proof}
To get the lower bound, we note that $f\in \fm^{\ord_o(f)}-\fm^{\ord_o(f)+1}$. So
\[
\ord_o(f) v(\fm)\le v(f).
\]
For the upper bound, by the way of definition of $A_X(v)$ recalled in \ref{secval} (see \cite{JM10} and \cite{BFFU13}), we can assume that $v$ is divisorial valuation with 
center at $o$ by approximating the general real valuation by divisorial valuations. 

The proof of the upper bound is inspired by \cite[Proof of Theorem 8.13]{BHJ15}.
When $X$ is smooth, it follows from Skoda's criterion for integrability
\begin{equation}\label{sko2izu}
\frac{A_X(v)}{v(f)}\ge \lct(f)=\min_{v}\frac{A_X(v)}{v(f)}\ge \frac{1}{\ord_o(f)}
\end{equation}
for any $f\in \mathcal{O}_{\mathbf{C}^n,0}$ (see \cite[Proposition 5.10]{JM10}). So we have
\[
v(\fm)\ord_o\le v\le A_X(v)\ord_o.
\]
When $X$ is only klt and $\bQ$-Gorenstein, we choose a log resolution $\mu: X'\rightarrow X$ and write $\mu^*K_X=K_{X'}+B'$ so that: 
\[
A_X(v)=A_{(X',B')}(v)=A_{X'}(v)-v(B').
\]
Because $X$ is klt, we have $B'\le (1-\epsilon) B'_{\rm red}$ for some $\epsilon=\epsilon(X')>0$. Because $B'_{\rm red}$ has simple normal crossings, we have $(X', B'_{\rm red})$ is lc, i.e. 
$0\le A_{(X',B'_{\rm red})}(v)=A_{X'}(v)-v(B'_{\rm red})$. Combining above, we get 
\begin{eqnarray*}
A_X(v)= A_{X'}(v)-v(B')&=&\epsilon A_{X'}(v)+(1-\epsilon)(A_{X'}(v)-v(B'_{\rm red}))+(1-\epsilon)v(B'_{\rm red})-v(B')\\
&\ge&  \epsilon A_{X'}(v)
\end{eqnarray*}
Assume $\xi$ is the center of $v$ on $X'$. Then by the smooth case (see \cite[Proposition 5.1]{JM10}), we have:
\[
v(f)=v(\mu^*f)\le A_{X'}(v) \ord_{\xi}(\mu^*f)\le \epsilon^{-1} A_X(v)\ord_{\xi}(\mu^*f). 
\]
By Izumi's linear complementary inequality under morphisms recalled in Theorem \ref{thm-UIzumi}, we have $\ord_{\xi}(\mu^*f)\le a_2\cdot \ord_{o}(f)$ for some $a_2=a_2(X,o)\ge 1$. 
So we get the wanted estimate with $c_1=\epsilon^{-1} a_2$:
\[
v(\fm)\ord_o\le v\le \epsilon^{-1}a_2\cdot  A_X(v)\ord_o. 
\]


\end{proof}

In the above proof, we used the following uniform version of Izmui's linear complementary inequalities. In other words, we claimed that $a_2$ in the above inequalities can be chosen to be
independent of $o'\in \mu^{-1}(o)$. Although this should hold in much
more generality (see \cite[6]{Izu07}), we just need the following version that follows from Izumi's proof.
Notice that if $\xi$ is the center of $v$ on $X'$, then $\xi$ is an irreducible subvariety of $X'$ and $\ord_{\xi}=\inf_{o'\in \xi}\ord_o'$. 
\begin{thm}[cf. \cite{Izu82, Izu85}]\label{thm-UIzumi}
Let $X$ be a normal affine variety, and $\mu: X'\rightarrow X$ be a birational morphism such that $X'$ is smooth. Assume that $Z=\mu^{-1}(o)$ is a compact subvariety of $X'$. Then there exists a constant
$a=a(X,o)$ such that for any $o'\in Z$ and any $f\in \cO_X$ we have
\begin{equation}\label{eq-2ord}
\ord_{o'}(\mu\circ f)\le a\cdot \ord_o(f).
\end{equation}
\end{thm}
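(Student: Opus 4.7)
For each fixed $o'\in Z$, the assignment $v_{o'}(f):=\ord_{o'}(\mu^*f)$ is a valuation on $\bC(X)$ centered at $o$: since $X'$ is smooth at $o'$, $\cO_{X',o'}$ is regular and $\ord_{o'}$ is a bona fide valuation, and $\mu(o')=o$. Because $X$ is normal over $\bC$, $\cO_{X,o}$ is analytically irreducible, so Izumi's classical linear complementary inequality \cite{Izu82,Izu85} applies pointwise and furnishes a constant $c_{o'}>0$ with $v_{o'}(f)\le c_{o'}\ord_o(f)$ for all $f\in\cO_{X,o}$. The content of the theorem is the \emph{uniformity} of these constants over the compact set $Z$.

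I would begin by reducing to the principalized SNC case. Applying Hironaka's principalization to the ideal sheaf $\fm\cdot\cO_{X'}$ and then an embedded resolution, one obtains a smooth modification $\pi:\tilde X'\to X'$ with $\fm\cdot\cO_{\tilde X'}=\cO_{\tilde X'}(-D)$ and $D=\sum_{i=1}^N d_iE_i$ of simple normal crossing support. If the theorem holds for $\mu\circ\pi$ then it holds for $\mu$, since for any $o'\in Z$ and any $\tilde o'\in\pi^{-1}(o')$ one has $\ord_{o'}(\mu^*f)\le\ord_{\tilde o'}(\pi^*\mu^*f)$. So we may assume $\fm\cdot\cO_{X'}$ is already SNC principal. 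Each of the finitely many $\ord_{E_i}$ is then a divisorial valuation on $\bC(X)$ centered at $o$, and classical Izumi yields constants $c_i>0$ with $\ord_{E_i}(f)\le c_i\ord_o(f)$.

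At any $o'\in Z$, choose SNC local coordinates $z_1,\dots,z_n$ in which the components of $D$ through $o'$ are $E_{i_j}=\{z_j=0\}$ for $j=1,\dots,\ell$, and factor $\mu^*f=z_1^{b_1}\cdots z_\ell^{b_\ell}g$ with $b_j=\ord_{E_{i_j}}(f)$ and $g$ not divisible by any $z_j$. Then
\[
\ord_{o'}(\mu^*f)=\sum_{j=1}^\ell b_j+\ord_{o'}(g)\le\Bigl(\sum_{i=1}^N c_i\Bigr)\ord_o(f)+\ord_{o'}(g),
\]
so the first summand is already uniformly controlled and only the residual order $\ord_{o'}(g)$ requires a uniform bound.

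Controlling $\ord_{o'}(g)$ uniformly in $o'$ is the main obstacle. To address it I would recognize $\ord_{o'}$ on $\cO_{X',o'}$ as the divisorial valuation $\ord_{F_{o'}}$ attached to the exceptional divisor of the blowup of $X'$ at $o'$, so that $\ord_{F_{o'}}\circ\mu^*$ is itself a divisorial valuation on $\cO_{X,o}$ centered at $\fm$ and classical Izumi supplies a constant $c_{F_{o'}}$. The remaining task is to show $\sup_{o'\in Z}c_{F_{o'}}<\infty$. I would do this by inspecting Izumi's original argument \cite{Izu82,Izu85}: the Izumi constant for a divisorial valuation $v$ centered at $\fm$ is expressible in terms of finitely many numerical invariants attached to the Rees valuations of $\fm$ and the combinatorial position of $v$ relative to the fixed log resolution, and as $o'$ moves through the finitely many SNC strata of $D$ these invariants stay in a bounded range. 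A cleaner alternative, as indicated by the author, proceeds through the Boucksom--Favre--Jonsson formalism (Appendix~II).
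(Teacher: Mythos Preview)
Your reduction to the SNC case is fine, and the splitting $\mu^*f=z_1^{b_1}\cdots z_\ell^{b_\ell}g$ correctly isolates the exceptional contribution, which is indeed uniformly controlled by the finitely many Izumi constants for the $\ord_{E_i}$. The gap is in the treatment of the residual term $\ord_{o'}(g)$. This is not a minor leftover: it carries essentially the full difficulty of the theorem. The factor $g$ depends on both $f$ and on the local chart at $o'$, and although $g$ is not divisible by any $z_j$, it can still vanish to arbitrarily high order at the specific point $o'$ along directions transverse to the exceptional locus. Your proposal to handle this by ``inspecting Izumi's original argument'' and asserting that the relevant invariants ``stay in a bounded range'' over the SNC strata is not an argument: Izumi's constant for $\ord_{F_{o'}}$ is not produced by any stratum-wise combinatorial formula, and as $o'$ varies the valuations $\ord_{F_{o'}}$ do not sit inside a single quasi-monomial cone of a fixed log-smooth model, so nothing in the setup furnishes boundedness. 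Pointing to Appendix~II without carrying out the argument is likewise not a proof.

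The paper supplies two concrete mechanisms, either of which is exactly what your sketch is missing. In the main-text proof one chooses a finite surjection $\Pi:(X,o)\to(\bC^n,0)$, reducing (via Izumi's own reduction) to a uniform bound for the generically finite map $\Pi\circ\mu$ between \emph{smooth} spaces; Tougeron's Jacobian argument then yields
\[
\ord_{o'}(g\circ\Pi\circ\mu)\le\bigl(r(o')+1\bigr)\,\ord_0(g),\qquad r(o')=\ord_{o'}\det J(\Pi\circ\mu),
\]
and uniformity follows because $o'\mapsto\ord_{o'}\det J$ is upper semicontinuous on the compact fibre, hence bounded. In Appendix~II the mechanism is intersection-theoretic: after compactifying with an ample $L$, a uniform Seshadri-type constant $\epsilon>0$ makes $M=\pi_{o'}^*L-\epsilon E_0$ ample for \emph{every} $o'$, and the negativity of the intersection matrix $(E_i\cdot E_j\cdot M^{n-2})$ on the exceptional support forces $\ord_{o'}(\mu^*g)\le(|c_{1,1}|/c_{0,1})\,\ord_{F_1}(g)$ with coefficients independent of $o'$. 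Your decomposition is compatible with either route, but does not by itself substitute for one of them.
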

\begin{proof}
We first recall how Izumi obtained \eqref{eq-2ord} for a fixed $o'\in \mu^{-1}(o)$. 
Izumi showed in \cite{Izu85} that there exist $a_1=a_1(X,o)\ge 1$ and $b_1=b_1(X,o)\ge 0$ such that
\[
\noindent
{\rm (CI_1):} \quad  \ord_o(fg)\le a_1 (\ord_o(f)+\ord_o(g))+b_1, \text{ for any } f, g\in \mathcal{O}_{X,o}.
\]
On the other hand, previously in \cite[Theorem 1.2]{Izu82} it was shown that ${\rm (CI_1)}$ implies ${\rm (CI_2)}$: there exists $a_2=a_2(o', a_1, b_1)\ge 1$ such that
\begin{equation*}
\hspace{-2cm} \!\!{\rm (CI_2):} \quad \ord_{o'}(\mu\circ f)\le a_2\cdot \ord_o(f) \text{ for any } f\in \mathcal{O}_{X,o}.
\end{equation*}
To prove that $({\rm CI_1})$ implies $({\rm CI_2})$, Izumi first chose a finite surjective morphism
$\Pi: (X, o)\rightarrow (\bC^n, 0)$ and reduced the proof of ${\rm (CI_2)}$ to the following inequality (in \cite[Lemma 1.1]{Izu82}), which is the version of the inequality ${\rm (CI_2)}$ in the case where the map ($\Pi\circ \mu$) has both smooth source and target:
\begin{equation}\label{eq-Tou}
\ord_{o'}(g\circ \Pi\circ \mu)\le c\cdot \ord_0(g) \text{ for any } g\in \mathcal{O}_{\bC^n,0}.
\end{equation}
In other words, Izumi showed (in \cite[Proof of Theorem 1.2]{Izu82}) that $a_2=a_2(a_1, b_1, c)$ for $a_1, b_1$ in ${\rm CI_1}$ and $c$ in \eqref{eq-Tou}. So we just need to show that $c$, which a priorly depends on $o'\in \mu^{-1}(o)$, can be chosen to be uniform
with respect to $o'\in \mu^{-1}(o)\subset (\Pi\circ\mu)^{-1}(0)$. Since $X'$ and $\bC^n$ are smooth, the inequality \eqref{eq-Tou} was already proved in \cite[Lemma 5.6]{Tou80}. We will follow Tougeron's proof to show that the constant $c$ in \eqref{eq-Tou} is indeed uniform.  

Choose local coordinates $\{z_j\}_{j=1}^n$ around $o'\in X'$ and let $\{w_i\}_{i=1}^n$ be the flat coordinates on $\bC^n$. Then near $o'\in X'$, $\mu$ is locally given by $n$-tuples of holomorphic functions
\[
w_i=w_i(z_1, \dots, z_n), \quad 1\le i\le n.
\]
Denote the Jacobian matrix by
\[
J=(J_{ij})=\left(\frac{\partial w_i}{\partial z_j}\right).
\]
Because the map $\Pi\circ \mu$ is surjective above a neighborhood of $0\in\bC^n$, the Jacobian determinant $\det(J)=\det(\partial w_i/\partial z_j)$ is not identically equal to 0 near $o'\in X'$. Denote
$r=r(o')=\ord_{o'}\det(J)$. For any $g\in \mathcal{O}_{\mathbb{C},0}$, consider the following system of equations obtained by the chain rule:
\begin{equation}
\sum_{i=1}^n \left(\frac{\partial g}{\partial w_i}\circ \mu\right)\frac{\partial w_i}{\partial z_j}=\frac{\partial}{\partial z_j}(g\circ \mu), \quad 1\le j\le n.
\end{equation}
If $g\circ\mu \in \fm_{o'}^{k+r+1}$, then $\frac{\partial}{\partial z_j}(g\circ\mu)\in \fm_{o'}^{k+r}$ for any $1\le j\le n$. This implies 
\[
\frac{\partial g}{\partial w_i}\circ \mu= \det(J)^{-1} ({\rm adj}(J))_{ij} \frac{\partial }{\partial z_j}(g\circ\mu) \in \fm_{o'}^{k},
\]
for any $1\le i\le n$ where ${\rm adj}(J)$ is the adjugate matrix of $J$.
Iterating this argument, we get that for any multiple index $\alpha\in \mathbb{N}^n$:
\[
\frac{\partial^\alpha g}{\partial w^\alpha}\circ \mu\in \mathfrak{m}_{o'}^{k+r+1-(r+1)|\alpha|}.
\]
On the other hand, if $\ord_0 g=p$, then there is some multiple index $\beta\in \mathbb{N}^n$ with $|\beta|=p$ such that $\frac{\partial^\beta g}{\partial w^\beta}(0)\neq 0$ and hence $\frac{\partial^\beta g}{\partial w^\beta} \circ \mu\not\in \mathfrak{m}_{o'}$ . So we get the inequality $k+r+1-(r+1)p\le 0$, which implies:
\[
\ord_{o'} \left(g\circ\mu\right) \le (r(o')+1)\cdot \ord_{0} g.
\]
Now we can choose $c=\max\left\{ \ord_{o'}\det(J); o'\in (\Pi\circ\mu)^{-1}(0)\right\}$. The maximum can be obtained, because
$o'\rightarrow \ord_{o'}\det(J)$ is upper semicontinuous and $(\Pi\circ\mu)^{-1}(o)$ is a compact set.

\end{proof}


\begin{rem}
In Appendix II, we will give a second proof of the needed uniform Izumi estimate in Theorem \ref{thm-UIzumi} following a referee's suggestion. \\
Actually there is also a proof of Theorem \ref{Izumi} using degeneration argument which involves some deep results from Minimal Model Program (MMP). This method actually allows us to prove a Skoda type estimate, i.e. there exists a constant $c=c(X, o)>0$ such that:
\begin{equation}\label{eqskoda}
\lct_o(f)\ge \frac{c}{\ord_0(f)} \text{ for any } f\in \mathcal{O}_{X,o}.
\end{equation}
Assuming \eqref{eqskoda}, we can easily complete the proof of Theorem \ref{Izumi} as in \eqref{sko2izu}.
Roughly speaking, the proof of \eqref{eqskoda} consists of two steps.
In the first step one can prove the estimate in the case that $(X, o)$ is an orbifold cone over a log Fano-pair and $f$ is a homogenous function, with the help of uniform estimates of $\alpha$-invariant by Tian and Boucksom-Hisamoto-Jonsson. In the second step one can use the lower semicontinuity of log canonical threshold to reduce the problem to the previous case by considering an equivariant degeneration of any klt singularity to an orbifold cone. It is the degeneration part that is achieved by the MMP through the notion of plt blow-ups. Following a referee's suggestion, to avoid the unnecessary complication involving MMP, we refer the interested reader to the proof presented in an earlier version of this paper on arXiv.
\end{rem}


From \eqref{kltizu}, we immediately get the following inequality:
\begin{cor}
There exists a constant $c_1=c_1(X,o)>0$ such that for any $v\in \Val_{X,o}$, we have:
\begin{equation}\label{izuvol}
A_X(v)^{n}\cdot \vol(v)\ge 
 \frac{e(\fm)}{c_1^n}>0,
\end{equation}
where $e(\fm)$ is the Hilbert-Samuel multiplicity of $R$ along $\fm$.
\end{cor}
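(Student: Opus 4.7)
The plan is to extract this corollary directly from the upper Izumi bound $v \le c_1 A_X(v)\,\ord_o$ in Theorem~\ref{Izumi} by comparing the valuation ideals $\fa_r(v)$ with ordinary powers of the maximal ideal $\fm$. Set $\lambda = c_1 A_X(v)$; the inequality $v(f) \le \lambda \ord_o(f)$ for every $f \in R$ is equivalent to the containment
\[
\fa_r(v) \;\subseteq\; \fm^{\lceil r/\lambda\rceil}
\]
for every $r > 0$, since $v(f) \ge r$ forces $\ord_o(f) \ge r/\lambda$. The useful feature is that this goes in the direction producing a \emph{lower} bound on $\len(R/\fa_r(v))$.

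Next I would pass to asymptotics. Containment of ideals gives
\[
\len_R\!\left(R/\fa_r(v)\right) \;\ge\; \len_R\!\left(R/\fm^{\lceil r/\lambda\rceil}\right),
\]
and the classical Hilbert--Samuel expansion on the right hand side yields
\[
\len_R\!\left(R/\fm^{\lceil r/\lambda\rceil}\right) \;=\; \frac{e(\fm)}{n!}\left(\frac{r}{\lambda}\right)^{\! n} + O(r^{n-1}).
\]
Dividing by $r^n/n!$ and letting $r \to \infty$, the definition \eqref{defvol} of $\vol(v)$ gives
\[
\vol(v) \;\ge\; \frac{e(\fm)}{\lambda^n} \;=\; \frac{e(\fm)}{c_1^n\, A_X(v)^n},
\]
which rearranges to the desired $A_X(v)^n\,\vol(v) \ge e(\fm)/c_1^n$.

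There is no real obstacle here: the content of the corollary is already packaged in the upper Izumi estimate, and the only point requiring care is to make sure we have chosen the direction of the comparison that controls $\len(R/\fa_r)$ from below (equivalently, that $\fa_r(v)$ sits inside a power of $\fm$ rather than containing one). Note also that $e(\fm) > 0$ since $(X,o)$ is a point on a variety of dimension $n$, so the constant $e(\fm)/c_1^n$ is genuinely positive. If one prefers, the same conclusion can be phrased via the identity $\vol(v) = e(\fa_\bullet(v))$ from \eqref{vol=mult} together with the monotonicity $e(\fa_r(v)) \ge e(\fm^{\lceil r/\lambda\rceil}) = e(\fm)\lceil r/\lambda\rceil^n$, but the length computation above is the most direct route.
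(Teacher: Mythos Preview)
Your proof is correct and is exactly the argument the paper intends: it says the corollary follows ``immediately'' from \eqref{kltizu}, and the intended derivation is precisely your containment $\fa_r(v)\subseteq \fm^{\lceil r/\lambda\rceil}$ (compare the identical manoeuvre in \eqref{inclusion}) followed by the Hilbert--Samuel asymptotics. The only small remark worth adding is that when $A_X(v)=+\infty$ the inequality holds by the convention $\hvol(v)=+\infty$, so one may assume $A_X(v)<+\infty$ throughout.
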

\begin{rem}
We notice that an inequality closely related to \eqref{izuvol} appeared in \cite[Theorem 1.3]{dFM15}. However, the Mather version of log discrepancy was used there for general Cohen-Macaulay singularities. In this paper, we use the ordinary log discrepancy for any $\bQ$-Gorenstein klt singularities.
\end{rem}

\section{Minimizing normalized volume}\label{sec-minhvol}
From the above discussions in Section \ref{secpre}, it seems natural to ask whether we can minimize the rescaling invariant  (0-homogeneous) functional:
\begin{equation}
\hvol(v)=
\left\{
\begin{array}{ll}
A_X(v)^n {\rm vol}(v) & \mbox{ if } A_X(v)<+\infty;\\
+\infty & \mbox{ if } A_X(v)=+\infty.
\end{array}
\right.
\end{equation}

To answer this question, we would like some properness property. So we ask whether 

\[
\{v\in \Val_{X,o} ;  A_X(v)^n \vol(v)\le M\}
\]
is compact under appropriate topology (compare a similar problem in \cite[Proposition 5.9]{JM10}). Notice that if $A_X(v)=+\infty$, we have defined $\hvol(v)=+\infty$.

The main goal in this section is to answer this question positively. The main technical result is the following estimate:
\begin{thm}\label{mainthm}
Assume $(X,o)$ is a $\bQ$-Gorenstein klt singularity.
There exists a positive constant $c_2=c_2(X,o)>0$ such that for any valuation centered at $o$, we have:
\begin{equation}\label{keyineq}
\vol(v)\ge c_2 \left(\sup_{\fm}\frac{v}{\ord_o}\right)^{1-n}\frac{1}{v(\fm)}.
\end{equation}
In fact, we can choose $c_2=2^{-n}e(\mathfrak{m})$ where $e(\mathfrak{m})$ is the Hilbert-Samuel multiplicity of the maximal ideal $\fm$ defining $o\in X$.
\end{thm}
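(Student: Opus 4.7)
Plan. Write $c := v(\fm)$ and $S := \sup_{f\in\fm} v(f)/\ord_o(f)$; both are finite by Theorem~\ref{Izumi}. The one-sided bound $v\le S\cdot\ord_o$ on $\fm$ yields the elementary containment $\fa_r\subset\fm^{\lceil r/S\rceil}$, whose naive use gives only $\vol(v)\ge e(\fm)/S^n$, strictly weaker than \eqref{keyineq} when $S\gg c$. To recover the extra factor $S/c$, I would filter $R/\fa_r$ by the powers of a single carefully chosen element $f_0\in\fm$.

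Choice of $f_0$ and telescoping. Since $v\ge c\cdot\ord_o$ forces $v(f)\ge 2c$ for every $f\in\fm^2$, the minimum $c$ of $v|_\fm$ must already be attained on $\fm\setminus\fm^2$. The set $\{\bar f\in\fm/\fm^2:v(f)=c\}$ is the complement of the image in $\fm/\fm^2$ of $\{g\in R:v(g)>c\}$ (an ideal because $v(h)\ge 0$ for $h\in R$), hence is a nonempty Zariski-open subset of the affine space $\fm/\fm^2$. Since $\bC$ is infinite and $R$ is Cohen--Macaulay (klt implies CM in characteristic zero), I may further insist that $f_0$ be a nonzerodivisor and a superficial element of order $1$ for $\fm$, so that $e(\fm;R/(f_0))=e(\fm)$; all three conditions are nonempty Zariski-open in $\fm/\fm^2$, and hence admit a common solution. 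For such an $f_0$, the axiom $v(f_0g)=v(f_0)+v(g)$ together with the nonzerodivisor property gives $(f_0)\cap\fa_r=f_0\cdot\fa_{r-c}$, producing the short exact sequence
\[
0\longrightarrow R/\fa_{r-c}\xrightarrow{\ \cdot f_0\ }R/\fa_r\longrightarrow R/(\fa_r+(f_0))\longrightarrow 0,
\]
which iterated gives the telescope $\len(R/\fa_r)=\sum_{j=0}^{\lceil r/c\rceil-1}\len(R/(\fa_{r-jc}+(f_0)))$.

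Asymptotic and main obstacle. Combining $\fa_{r'}\subset\fm^{\lceil r'/S\rceil}$ with the Hilbert--Samuel asymptotic in the $(n-1)$-dimensional ring $R/(f_0)$, each summand is at least $\tfrac{e(\fm)}{(n-1)!}\lceil (r-jc)/S\rceil^{n-1}+O((r-jc)^{n-2})$. Recognising the discrete sum as a Riemann sum for $\int_0^{r/c}(r-ct)^{n-1}\,dt=r^n/(nc)$ yields
\[
\len(R/\fa_r)\ge\frac{e(\fm)}{n!\,S^{n-1}\,c}\,r^n+O(r^{n-1}),
\]
so that $\vol(v)\ge e(\fm)/(S^{n-1}c)$; this already beats \eqref{keyineq}, and the constant $2^{-n}$ in $c_2$ comfortably absorbs all rounding and lower-order slack. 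The main difficulty is the choice of $f_0$, which must simultaneously satisfy a valuation condition ($v(f_0)=c$) and algebraic genericity conditions (nonzerodivisor and superficial) needed for the filtration and the Hilbert--Samuel asymptotic to behave cleanly. The former is handled by observing that $\{v(g)>c\}$ is an ideal; the latter rests on Cohen--Macaulayness of klt singularities together with the infiniteness of $\bC$.
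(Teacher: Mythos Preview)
Your proof is correct and takes a genuinely different route from the paper. The paper normalizes $v(\fm)=1$, picks any $g\in\fm$ with $v(g)=1$, fixes a $\bC$-basis $\{u_i^{(p)}\}$ of each $\fm^p/\fm^{p+1}$, and then verifies by a direct combinatorial argument (tracking $v$ and $\ord_o$ simultaneously) that the elements $g^{r-l}u_i^{(\lfloor l/s\rfloor)}$, for $0\le l\le r$ with $s\nmid(l+1)$, are linearly independent in $R/\fa_{r+1}$; counting them gives $(s-1)\sum_{p<r/s}d_p\sim e(\fm)(s-1)s^{-n}r^n/n!$. Your approach replaces this explicit construction by the filtration coming from multiplication by a single element $f_0$: the exact sequence $0\to R/\fa_{r-c}\to R/\fa_r\to \bar R/\overline{\fa_r}\to 0$ telescopes the problem to the $(n-1)$-dimensional Hilbert--Samuel asymptotic in $\bar R=R/(f_0)$. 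The price is that $f_0$ must be simultaneously a nonzerodivisor and superficial for $\fm$, which you secure via the Cohen--Macaulayness of klt singularities together with the observation that $\{v>c\}$ is an ideal containing $\fm^2$. Your argument is cleaner and in fact yields the sharper constant $c_2=e(\fm)$ rather than $2^{-n}e(\fm)$; the paper's hands-on construction, by contrast, is more elementary in that it requires no structural input beyond the Hilbert function of $\fm$ and in particular makes no use of Cohen--Macaulayness. One minor omission: Theorem~\ref{Izumi} only bounds $S$ when $A_X(v)<+\infty$, so you should note at the outset that the case $S=+\infty$ renders \eqref{keyineq} trivial (cf.\ Remark~\ref{mainrem}).
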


\begin{rem}\label{mainrem}
\begin{itemize}
\item 
Sebastien Boucksom and Mattias Jonsson pointed out to me that there are valuations such that the right hand side is zero. In this case, the inequality \eqref{keyineq} becomes trivial. However, if the log discrepancy $A(v)<+\infty$, 
then Izumi-type inequality in Proposition \ref{Izumi} implies that the right hand side of the above inequality is strictly positive.
\item
$\sup_{\fm}(v/\ord_0)$ is called the ``Skewness'' function on $v$ in \cite{FJ04}. 
When $n=2$ and $o=0$ is smooth, by \cite[Remark 3.33]{FJ04} (see also \cite[Remark 4.9]{BFJ12}), there is an identity
\[
\vol(v)=\left(\sup_{\fm}\frac{v}{\ord_o}\right)^{-1}\frac{1}{v(\fm)}.
\]
So inequality \eqref{keyineq} is a weak generalization of this formula to higher dimension. On the other hand, in higher dimensions, as we will see in Examples \ref{exmono}, there is no inequality in the other direction, i.e. there is no uniform upper bound of $\vol(v)\left(\sup_{\fm}v/\ord_o\right)^{n-1} v(\fm)$.
\end{itemize}
\end{rem}
\begin{cor}
Theorem \ref{proper} holds.
\end{cor}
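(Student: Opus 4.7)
The plan is to simply combine the two ingredients stated just before: the upper Izumi bound from Theorem \ref{Thm-Izumi} and the volume inequality of Theorem \ref{mainthm}.

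First I would apply Theorem \ref{mainthm}, which gives
\[
\vol(v)\ge c_2\Bigl(\sup_{\fm}\tfrac{v}{\ord_o}\Bigr)^{1-n}\frac{1}{v(\fm)}.
\]
Next I would use the upper half of the Izumi estimate \eqref{kltizu}, namely $v\le c_1 A_X(v)\ord_o$, to deduce $\sup_\fm(v/\ord_o)\le c_1 A_X(v)$. Since the exponent $1-n$ is nonpositive (the case $n=1$ being essentially trivial as $\hvol$ reduces to $A_X(v)$ times $\vol(v)$), the function $x\mapsto x^{1-n}$ is nonincreasing on $(0,\infty)$, so
\[
\Bigl(\sup_{\fm}\tfrac{v}{\ord_o}\Bigr)^{1-n}\ge \bigl(c_1 A_X(v)\bigr)^{1-n}.
\]
Combining these two inequalities and multiplying by $A_X(v)^n$ yields
\[
A_X(v)^n\vol(v)\ge \frac{c_2}{c_1^{n-1}}\cdot\frac{A_X(v)}{v(\fm)},
\]
so Theorem \ref{proper} holds with $K=c_2/c_1^{n-1}$.

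A small sanity check is needed to ensure all quantities appearing are finite and positive. The hypothesis $A_X(v)<+\infty$ combined with the Izumi upper bound guarantees $\sup_\fm(v/\ord_o)\le c_1 A_X(v)<+\infty$, so raising to the power $1-n\le 0$ is legitimate and yields a strictly positive number. On the other hand, since $v$ is centered at $o$ we have $v(\fm)>0$, so the lower Izumi bound $v(\fm)\ord_o\le v$ in \eqref{kltizu} ensures $\sup_\fm(v/\ord_o)\ge v(\fm)>0$ as well, and the right-hand side of \eqref{keyineq} is finite and positive.

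There is no real obstacle here: the entire content is packaged into the two preceding theorems, and the only mild subtlety is being careful about the sign of the exponent $1-n$ so that an upper bound on $\sup_\fm(v/\ord_o)$ translates to a lower bound after raising to this power. The deeper work of course lies in establishing Theorems \ref{Thm-Izumi} and \ref{mainthm}, whose proofs appear in the preceding sections.
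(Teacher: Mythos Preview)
Your proof is correct and follows essentially the same approach as the paper: combine the volume inequality \eqref{keyineq} with the upper Izumi bound from \eqref{kltizu}, obtaining the constant $K=c_2 c_1^{1-n}$. Your added sanity check on finiteness and positivity (using $A_X(v)<+\infty$ and $v(\fm)>0$) is a welcome clarification that the paper leaves implicit.
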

\begin{proof}
Combining \eqref{keyineq} and Izumi type estimate \eqref{kltizu}, we get the estimate in Theorem \ref{proper} with $K=c_2 c_1^{1-n}$:
\begin{eqnarray*}
A_X(v)^n \cdot \vol(v)&\ge & c_2 \left(\sup_{\fm} \frac{v}{A_X(v) \ord_o}\right)^{1-n}\frac{A_X(v)}{v(\fm)}\\
&\ge & c_2 c_1^{1-n} \frac{A_X(v)}{v(\fm)}.
\end{eqnarray*}

\end{proof}
Now following Jonsson-Musta\c{t}\u{a} (\cite[Section 4.1]{JM10}), we endow the space $\Val_{X,o}$ with weakest topology for which the evaluation map $\Val_{X,o}\ni v\rightarrow v(f)$ is continuous for all nonzero rational function $f$ in the quotient ring of $R=\cO_{X,o}$. 
To continue, we make the following:
\vskip 2mm
{\bf Hypothesis C:} $\hvol$ is lower semicontinuous on $\Val_{X,o}$.
\vskip 2mm

By Theorem \ref{proper} we get:
\begin{cor}\label{maincor}
Assume $(X, o)$ is a $\bQ$-Gorenstein klt singularity. Under {\bf hypothesis C}, the normalized volume functional $\hvol(v)$ is minimized at a real valuation $v_*\in \Val_{X,o}$ (and hence at any positive multiple of $v_*$).
\end{cor}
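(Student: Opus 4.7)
The plan is a direct-method argument made possible by the properness estimate of Theorem \ref{proper}. Set $I:=\inf_{v\in\Val_{X,o}}\hvol(v)$, which is strictly positive by the Izumi corollary \eqref{izuvol}, and pick a minimizing sequence $\{v_i\}\subset\Val_{X,o}$. Since $\hvol$ is $0$-homogeneous and every $v\in\Val_{X,o}$ satisfies $v(\fm)\in(0,+\infty)$ (positivity is precisely the condition that $v$ be centered at $o$; finiteness holds because $\fm$ is a nonzero ideal), I would first rescale each $v_i$ to normalize $v_i(\fm)=1$.

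Next I would extract uniform pointwise bounds on $R$. For $i\gg 1$ we have $\hvol(v_i)\le I+1$, so Theorem \ref{proper} with $v_i(\fm)=1$ gives
\begin{equation*}
A_X(v_i)\;\le\;\frac{I+1}{K}\;=:\;M.
\end{equation*}
Feeding this into the upper Izumi inequality \eqref{kltizu} yields $v_i(f)\le c_1 M\cdot\ord_o(f)$ for every $f\in R$, so each sequence $(v_i(f))_i$ lies in a fixed compact interval.

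The topology on $\Val_{X,o}$ is, by construction, the one induced from the product topology on $\mathbb{R}^{\bC(X)^\times}$ via the evaluation embedding. Tychonoff's theorem thus produces a subsequence $v_{i_k}$ converging pointwise on $R$ to a nonnegative function $v_*$. The valuation axioms (additivity on products, subadditivity/min on sums) are closed under pointwise limits, and $v_*\le c_1 M\cdot\ord_o<+\infty$ on $R\setminus\{0\}$, so $v_*$ extends uniquely to a real valuation on $\bC(X)$. Finite generation of $\fm$ gives $v_*(\fm)=\min_j v_*(g_j)=\lim_k v_{i_k}(\fm)=1>0$, so $v_*\in\Val_{X,o}$ is genuinely centered at $o$ and the normalization passes to the limit.

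Finally, Hypothesis C gives $\hvol(v_*)\le\liminf_k\hvol(v_{i_k})=I$, so $v_*$ is a minimizer; the assertion that every positive multiple of $v_*$ is also a minimizer is immediate from $0$-homogeneity. The main delicate point is the compactness step: one must verify that the pointwise limit is again a valuation in $\Val_{X,o}$ with the prescribed normalization preserved. Both properties hinge on the Izumi upper bound (which transfers to the limit and prevents the escape $v_*\equiv 0$) together with finite generation of $\fm$ (which commutes the limit with the infimum defining $v(\fm)$).
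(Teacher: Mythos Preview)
Your argument follows the same direct-method strategy as the paper's: normalize to $v(\fm)=1$, use the properness estimate \eqref{properineq} to bound $A_X$ on a minimizing family, appeal to compactness of the resulting set, and conclude via Hypothesis~C. The paper packages the compactness step by citing \cite[Proposition 5.9]{JM10} for the compactness of $V_M=\{v\in\Val_{X,o}:v(\fm)=1,\ A_X(v)\le M\}$ and then intersects the nested closed sublevel sets $W_C=\{\hvol\le C\}\subset V_M$ as $C\downarrow\hvol(X,o)$; you instead unpack the compactness via Tychonoff and run a minimizing sequence. Your verification that a pointwise limit remains a valuation centered at $o$ (closure of the valuation axioms, finiteness via the Izumi bound, and $v_*(\fm)=1$ via finite generation of $\fm$) is correct and is essentially what the cited result in \cite{JM10} establishes.

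There is one technical slip. Tychonoff yields compactness, not sequential compactness, and $\prod_{f\in R}[0,c_1M\,\ord_o(f)]$ is an uncountable product over $R=\cO_{X,o}$, hence not metrizable; so ``produces a subsequence $v_{i_k}$'' is not justified as written. The fix is painless: either pass to a convergent \emph{subnet} (Hypothesis~C is topological lower semicontinuity and therefore applies to nets), or---cleaner, and what the paper does---avoid sequences altogether: once $V_M$ is compact and each $W_C$ is closed in $V_M$ by Hypothesis~C, the nested family $\{W_C\}_{C>\hvol(X,o)}$ of nonempty compact sets has nonempty intersection, and any point in $\bigcap_C W_C$ is a minimizer.
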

\begin{proof}
By inequality \eqref{izuvol}, $\hvol$ has a positive lower bound on $\Val_{X,o}$. For convenience, we denote:
\begin{equation}\label{eq-hvX}
\hvol(X,o):=\inf_{v\in \Val_{X,o}}\hvol(v).
\end{equation}
Notice that both sides of \eqref{properineq} are rescaling invariant. So we can restrict to consider only valuations with $v(\fm)=1$. By the estimate \eqref{properineq} for any $C>0$, the sub level set 
\[
W_C:=\{v\in \Val_{X}; c_X(v)=o, v(\fm)=1, \hvol(v)\le C\}
\] 
is contained in the following set,
\[
V_M:=\{v\in \Val_X; c_X(v)=o, v(\fm)=1, A(v)\le M\}
\]
for some uniform $M=M(C,X,o)>0$. Now using the same proof of \cite[Proposition 5.9]{JM10}, we know that $V_M$
is compact subspace of $\Val_{X,o}$. By letting $C\rightarrow \hvol(X,o)>0$ (see \eqref{eq-hvX}) and using {\bf Hypothesis C}, we conclude that there is a minimizer 
\[
v_*\in \bigcap_{C\ge \hvol(X,o)}W_C.
\] 
\end{proof}

\begin{rem}\label{remcont}
By \cite[Lemma 5.7]{JM10}, $A_X(v)$ is lower semicontinuous. So if $\vol$ is continuous then {\bf Hypothesis C} is valid. Note that by \cite[Corollary D]{BFJ12}, $\vol$ is continuous on the space of quasi-monomial valuations for a fixed log smooth model. So for any fixed log smooth model $(Y,D)$, $\hvol$ has a minimizer when restricted to ${\rm QM}(Y,D)$. Actually we expect that the minimizer $v_*$ is obtained on some log smooth model (item 3 of Conjecture \ref{mainconj}).
\end{rem}

\subsection{Proof of Theorem \ref{mainthm}}\label{sec-pfmain}
Denote by $R=\mathcal{O}_{X,o}$ the local ring at point $o\in X$. The maximal ideal $\fm$ 
consists of all holomorphic germs vanishing at $o$. Then $R$ is a finitely generated $\bC$ algebra
and $R/\fm=\bC$.

Let $v$ be a valuation centered at $\fm$. From now on we normalize 
$v$ such that $v(\fm)=1$.
We will denote $v_0=\ord_o$. Notice that $v_0$ does not have to be a valuation. However it satisfies the following properties:
\begin{enumerate}
\item $v_0(fg)\ge v_0(f)+v_0(g)$. The equality does not hold if ${\rm Gr}_m R$ is not integral (for example: 2-dimensional $A_2$-singularity: $R=A_{\mathfrak{m}}$ where $A=\bC[x,y,z]/(x^2+y^2+z^3)$ and $\mathfrak{m}=(\bar{x},\bar{y},\bar{z})$). 
\item $v_0(f+g)\ge \min\{v_0(f), v_0(g)\}$ and the equality holds if $v_0(f)\neq v_0(g)$. 
\end{enumerate}
We define the following invariant of $v$:
\begin{equation}\label{eqsv2}
s(v)=\max\left\{2, \left\lceil\sup_{\fm}\frac{v}{v_0}\right\rceil\right\}.
\end{equation}
It's easy to see that (using $\sup_{\fm}(v/v_0)\ge 1$)
\begin{equation}\label{eq-sv2}
s(v)\le 2\sup_{\fm}(v/v_0).
\end{equation}
By Theorem \ref{Izumi}, $s=s(v)<\infty$ so that 
\begin{equation}\label{izumineq1}
v_0=v(\fm)v_0\le v\le s(v) v_0.
\end{equation} 
Denote the valuation ideals $
\fa_r:=\{f\in R; v(f)\ge r\}$. 
Then by \eqref{izumineq1}, we have
\begin{equation}\label{inclusion}
\fm^r\subseteq \fa_r\subseteq \fm^{\lfloor r/s\rfloor}.
\end{equation}
 By \eqref{inclusion} we have:
\[
\len(R/\fm^{r})\ge \len(R/\fa_r)\ge \len(R/\fm^{\lfloor r/s\rfloor}).
\]
Observe that in our situation we can replace $\len$ by $\dim_{\bC}$, because $\fa_r$ is $\fm$-primary. 
Denote 
\begin{eqnarray*}
d_r:=\len(\fm^{r}/\fm^{r+1})&=&\dim_{\bC}(\fm^r/\fm^{r+1})=\dim_\bC (R/\fm^{r+1})-\dim_\bC(R/\fm^{r}). 
\end{eqnarray*}
It's well known that:
\[
d_r=e(\fm) \frac{r^{n-1}}{(n-1)!}+O(r^{n-2})
\]
where $e(\fm)$ is the Hilbert-Samuel multiplicity of $\fm$.


\begin{proof}[Proof of Theorem \ref{mainthm}]
First normalize $v$ such that 
\[
1=v(\fm)=\inf_{\fm}\{v(f); f\in\fm\}=v(g),
\] 
for some fixed $g\in \fm$.
We will use the notation at the beginning of this section. In particular, $s(v)$ in \eqref{eqsv2} satisfies $s(v)\le 2\sup_{\fm}(v/v_0)$. 
So we only need to prove the following inequality:
\begin{equation}\label{normalineq}
\vol(v)\ge c\cdot s(v)^{1-n},
\end{equation}
for some uniform positive constant $c$.
For any $l, r\in \mathbb{N}$ with $0\le l\le r$, we denote $p=p(l,s)=\lfloor l/s\rfloor$. Choose a $\bC$-basis of $\fm^{p}/\fm^{p+1}$:
\[
[{\bf u}^{(p)}]:=[\{u^{(p)}_1,\dots, u^{(p)}_{d_p}\}]=\{[u^{(p)}_1],\dots, [u^{(p)}_{d_p}]\}.
\]
We have: $v_0(u^{(p)}_i)=p$ for any $1\le i\le d_p$. By \eqref{izumineq1}, we have:
\[
p\le v(u_i^{(p)})\le s(v)p.
\]
Now we define the elements:
\[
x_i^{(r-l, p)}=g^{r-l}\cdot u^{(p)}_i, \mbox{ for } 1\le i\le d_p.
\]
Then $v_0(x_i^{(r-l,p)})\ge r-l+p$ and more importantly
\[
v(x_i^{(r-l,p)})=r-l+v(u_i^{(p)})\le r-l+sp=r-l+s\lfloor l/s\rfloor\le r.
\]
Now we can arrange the following table. Notice that to prove our result, we only need to consider those $r$'s which are integral multiples of $s$.
\begin{center}
\renewcommand*{\arraystretch}{1.6}
\begin{tabular}{c|c|c|c|c}
$p$&$l$ & function & lower bound of $\ord_o$ & range of $v$ \\
\thickhline
$0$& $0$     &    $x^{(r,0)}_1=g^{r}$        & $r$ & $r$ \\
$0$&$1$     &    $x^{(r-1,0)}_1=g^{r-1}$    & $r-1$ & $r-1$\\
\dots&\dots   &   \dots                                                 &  $\dots$ & $\dots$\\
$0$&$s-2$     &    $x^{(r-s+2,0)}_1=g^{r-s+2}$   &  $r-s+2$ & $r-s+2$\\
\hline
$1$&$s$ &    $x^{(r-s,1)}_1, \dots, x^{(r-s, 1)}_{d_1}$ & $r-s+1$ & $[r-s+1, r]$\\
$1$& $s+1$ & $x^{(r-s-1,1)}_1, \dots, x^{(r-s-1,1)}_{d_1}$ & $r-s$ & $[r-s, r]$\\
\dots & \dots &\dots &\dots\\
$1$&$2s-2$ & $x^{(r-2s+2,1)}_1, \dots, x^{(r-2s+2,1)}_{d_1}$ & $r-2s+3$ & $[r-2s+3, r]$ \\
\hline
$2$&$2s$ & $x^{(r-2s,2)}_1, \dots, x^{(r-2s,2)}_{d_2}$ & $r-2s+2$ & $[r-2s+2, r]$\\
\dots &\dots &\dots &\dots\\
$q-1$&$qs-2$ & $x^{(r-qs+2,q-1)}_1, \dots, x^{(r-qs+2,q-1)}_{d_{q-1}}$ & $r-qs+q+1$ & 
$[r-qs+q+1, r]$\\
\hline
$q$&$q s$ & $x^{(r-q s, q)}_1, \dots, x^{(r-q s, q)}_{d_{q}}$ & $r-q s+q$ & $[r-q s+q, r]$\\
\dots&\dots &\dots &\dots &\dots\\
$\frac{r}{s}-1$ & $r-2$ & $x_1^{(2,(r/s)-1)}, \dots, x_{d_{(r/s)-2}}^{(2,(r/s)-1)}$ & $(r/s)+1$ & $[(r/s)+1, r]$\\
\hline
$r/s$ & $r$ & $x_1^{(0,r/s)},\dots, x_{d_{r/s}}^{(0,r/s)}$ & $r/s$ & $[r/s, r]$
\end{tabular}
\end{center}
We consider the set:  
\[
\left\{\left[x_i^{(r-l,\lfloor l/s\rfloor)}\right]\in R/\fa_{r+1}; \text{ for all } x_i^{(r-l,\lfloor l/s\rfloor)} \text{ in the 3rd column of the above table}\right\}.
\]
{\bf Claim:} The above set consists of linearly independent elements.\\

Assuming the claim, we can get the estimate of $\dim_{\bC}R/\fa_{r+1}$:
\begin{eqnarray*}
\dim_{\bC}R/\fa_{r+1}&\ge &(s-1)(d_0+d_1+\dots+d_{(r/s)-1})\\
&=&(s-1)\sum_{i=0}^{(r/s)-1}\left[ e(\fm)\frac{i^{n-1}}{(n-1)!}+O(i^{n-2})\right]\\
&=&\frac{ e(\fm)(s-1)}{(n-1)!}\frac{1}{n}\left(\frac{r}{s}\right)^{n}+O(r^{n-1})\\
&=&\frac{e(\fm)(s-1)}{s^{n}}\frac{r^n}{n!}+O(r^{n-1}).
\end{eqnarray*}
So we can estimate the lower bounds of volume:
\begin{eqnarray*}
\vol(v)&=&\lim_{r\rightarrow+\infty}\frac{n!}{(r+1)^n}\dim_{\bC} R/\fa_{r+1}\ge e(\mathfrak{m})\left(s^{1-n}-s^{-n}\right).
\end{eqnarray*}
By the definition of $s(v)$ in \eqref{eqsv2}, $s\ge 2$ so that $(s-1)/s^n\ge s^{1-n}/2$. Combining this with \eqref{eq-sv2}, we get:
\begin{eqnarray*}
\vol(v)\ge e(\fm) s^{1-n}/2= 2^{-n} e(\fm) \left(\sup_{\fm}\frac{v}{v_0}\right)^{1-n}.
\end{eqnarray*}
So we complete the proof of Theorem \ref{mainthm} with $c_2=2^{-n} e(\fm)$. 

Finally we verify the Claim. Firstly, elements coming from the same row are linearly indepenent. Indeed, for any ${\bf a}=\{a_1,\dots, a_{d_p}\}\neq {\bf 0}$, we have:
\[
x=\sum_{i=1}^{d_p}a_i x_i^{(r-l,p)}=g^{r-l}\sum_{i=1}^{d_p}a_i u_i^{(p)}.
\]
Since $\{u_i^{(p)}; 1\le i\le d_p\}$ is a basis of $\fm^p/\fm^{p+1}$, we see that
\[
u^{(p)}:=\sum_{i=1}^{d_p}a_i u_i^{(p)}\in \fm^p-\fm^{p+1}. 
\]
So $\ord_o(u^{(p)})=p=\lfloor l/s\rfloor$ and $v(x)=r-l+v(u^{(p)})\le r-l+\lfloor l/s\rfloor s\le r$, which implies $[x]\neq 0\in R/\fa_{r+1}$.

Next, we show that all elements are linearly independent. Notice that for any linear combination $x$ of all $x^{(r-l,\lfloor l/s\rfloor)}_{i}$ in the 3rd column of the above table, we can decompose it as:
\[
x=\sum_{l=0, s\nmid (l+1)}^{r}  b_{l} x^{(r-l,\lfloor l/s\rfloor) }=\sum_{l=0,s\nmid (l+1)}^{r} b_{l}g^{r-l}u^{(\lfloor l/s\rfloor)}
=g^{r-L}\sum_{l=0, s\nmid (l+1)}^{L} b_l g^{L-l}u^{(\lfloor l/s\rfloor)}=:g^{r-L}y,
\]
where $L=\max\{l; b_l\neq 0\}$ satisfies $s\nmid L+1$, and $u^{(\lfloor l/s\rfloor)}$ is a nontrivial linear combination of $u^{(\lfloor l/s\rfloor)}_1, \dots, u^{(\lfloor l/s \rfloor)}_{d_{\lfloor l/s\rfloor}}$. Now it's clear that (compare with the above table)
\[
v_0(g^{L-l}u^{(\lfloor l/s\rfloor)})\ge L-l+\lfloor l/s\rfloor > \lfloor L/s\rfloor,
\]
for any $l<L$ and $s\nmid (l+1)$. So we have:
\[
v_0(y)=\min\left\{v_0(g^{L-l}u^{(\lfloor l/s\rfloor)}); b_l\neq 0\right\}=v_0(u^{(\lfloor L/s\rfloor)})=\lfloor L/s\rfloor.
\]
So we see that
\[
v(x)=r-L+v(y)\le r-L+s\cdot v_0(y)=r-L+s \lfloor L/s\rfloor\le r,
\]
which implies $[x]\neq 0\in R/\fa_{r+1}$.
\end{proof}

\section{Examples}\label{SecExmp}

\begin{examp}

\label{exmono}

We consider monomial valuations on $(\bC^n, 0)$.
For ${\bf x}=(x_1,\dots, x_n)\in \bR^n_{>0}$, without loss of generality, we can assume $x_1\le x_2\le \dots\le x_n$.
\[
A_X(v_{\bf x})=\sum_{i}x_i, \vol(v_{\bf x})=\frac{1}{\prod_{i}x_i}, v_{\bf x}(\fm)=\min_i\{x_i\}=x_1, \sup_{\fm}\frac{v}{\ord_o}=\sup_i\{x_i\}=x_n.
\]
So we get that:
\[
\left(\sup_{\fm}\frac{v}{\ord_o}\right)^{n-1}\vol(v_{\bf x})v_{\bx}(\fm)=\frac{x_n^{n-1}x_1}{\prod_{i=1}^nx_i}=\prod_{i=2}^{n-1} \frac{x_n}{x_i}\ge 1.
\]
However, there is no upper bound for the right hand side as mentioned in Remark \ref{mainrem}.

We can also observe the inequalities:
\begin{enumerate}
\item
\[
\sup_{\fm}\frac{v}{\ord_o}=x_n\le \sum_{i=1}^nx_i= A(v_{\bf x}).
\]
\item
\[
A_X(v_{\bf x})^n\vol(v_{\bf x})=\frac{(\sum_i x_i)^n}{\prod_i x_i}\ge\frac{\sum_i x_i}{\min_i\{x_i\}}= \frac{A_X(v_{\bf x})}{v_{\bf x}(\fm)}
\]
\end{enumerate}
By using arithmetic-geometric inequality, we also get the estimate:
\[
\vol(v_{\bf x})=\frac{1}{\prod_i x_i}\ge \frac{n^n}{(\sum_i x_i)^n}.
\]
Notice that
\[
\lct(\fa_r(v))=\frac{1}{r}\sum_{i}x_i=\frac{A_X(v_{\bf x})}{r}=\frac{A_X(v_{\bf x})}{v_{\bf x}(\fa_r(v_{\bf x}))}\Longrightarrow \lct(\fa_{\LargerCdot}(v_{\bf x}))=\sum_i x_i.
\]
So we get:
\[
\vol(v_{\bf x})\ge \frac{n^n}{\lct(\fa_{\LargerCdot}(v_{\bf x}))}.
\]
This is essentially the monomial case of more general results from \cite{FEM}, \cite{Mus02}, which says that:
\begin{equation}\label{eq-FEM}
e(\fa_{\LargerCdot}) \ge \frac{n^n}{{\rm lct}(\fa_{\LargerCdot})^n}
\end{equation}
for any graded sequence of zero-dimensional ideals $\fa_{\LargerCdot}$ in $R$. Applying \eqref{eq-FEM} to $\{\fa_r(v)\}$ we get:
\[
\vol(v)\ge \frac{n^n}{{\rm lct}(\fa_\LargerCdot(v))^{n}}\ge \frac{n^n}{A_X(v)^{n}}=\frac{A_X(\ord_0)^n\vol(\ord_0)}{A_X(v)^n}.
\]
In other words, we have proved the following:
\begin{prop}\label{prop-sm}
Let $(X, o)=(\bC^n, 0)$ and $E=\bP^{n-1}$ be the exceptional divisor of the standard blow up at $0\in \bC^n$. 
Then $\ord_{E}$ is a minimizer of $\hvol$ on $\Val_{X,o}$ with $\hvol(\ord_E)=n^n$.
\end{prop}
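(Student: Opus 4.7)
The plan is to extend the argument already sketched for monomial valuations to an arbitrary real valuation $v \in \Val_{X,o}$ centered at $0 \in \mathbb{C}^n$. The two essential inputs are the de Fernex--Ein--Musta\c{t}\u{a} inequality \eqref{eq-FEM} applied to the graded sequence of valuation ideals, and a simple comparison between the log canonical threshold of that graded sequence and the log discrepancy $A_X(v)$.

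First I would reduce to valuations with $A_X(v) < +\infty$, since otherwise $\hvol(v) = +\infty$ by definition and there is nothing to prove. For such a $v$, consider the graded sequence of $\fm$-primary ideals $\fa_{\LargerCdot}(v) = \{\fa_r(v)\}_{r \ge 0}$ where $\fa_r(v) = \{f \in R : v(f) \ge r\}$. By the equality $\vol(v) = e(\fa_{\LargerCdot}(v))$ from \eqref{vol=mult}, the inequality \eqref{eq-FEM} applied to this graded sequence gives
\begin{equation*}
\vol(v) = e(\fa_{\LargerCdot}(v)) \ge \frac{n^n}{\lct(\fa_{\LargerCdot}(v))^n}.
\end{equation*}

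Next, I would bound $\lct(\fa_{\LargerCdot}(v))$ from above by $A_X(v)$. The standard inequality for log canonical thresholds of ideals asserts that $\lct(\fa) \le A_X(w)/w(\fa)$ for every valuation $w$ with $A_X(w) < +\infty$ and every nonzero ideal $\fa$. Taking $w = v$ and $\fa = \fa_r(v)$, one has $v(\fa_r(v)) \ge r$, so
\begin{equation*}
\lct(\fa_r(v)) \le \frac{A_X(v)}{v(\fa_r(v))} \le \frac{A_X(v)}{r}.
\end{equation*}
Since $\lct(\fa_{\LargerCdot}(v)) = \lim_{r \to \infty} r \cdot \lct(\fa_r(v)) = \sup_{r} r \cdot \lct(\fa_r(v))$ by definition of the log canonical threshold of a graded sequence, this yields $\lct(\fa_{\LargerCdot}(v)) \le A_X(v)$. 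Combining the two estimates gives $\hvol(v) = A_X(v)^n \vol(v) \ge n^n$.

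Finally, I would check equality for $v = \ord_E$. Since the blow-up of $0 \in \mathbb{C}^n$ along $E = \mathbb{P}^{n-1}$ satisfies $K_{Y/X} = (n-1)E$, we have $A_X(\ord_E) = n$; and because $\fa_r(\ord_E) = \fm^r$, the computation $\vol(\ord_E) = \lim_r n!\,\dim_{\mathbb{C}}(R/\fm^r)/r^n = 1$ gives $\hvol(\ord_E) = n^n$. Thus $\ord_E$ realizes the lower bound, proving the proposition. There is no serious obstacle here beyond invoking \eqref{eq-FEM}: the argument is essentially the one already carried out in the monomial case, lifted to arbitrary valuations via the language of graded sequences of $\fm$-primary ideals.
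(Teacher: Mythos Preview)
Your proof is correct and follows essentially the same approach as the paper: apply the de Fernex--Ein--Musta\c{t}\u{a} inequality \eqref{eq-FEM} to the graded sequence $\fa_{\LargerCdot}(v)$, combine it with the bound $\lct(\fa_{\LargerCdot}(v)) \le A_X(v)$, and check that $\ord_E$ achieves the resulting lower bound $n^n$. The paper presents this more tersely inside the discussion of Example \ref{exmono}, but the logic is identical.
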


\end{examp}

\begin{examp}[experimental example]\label{exhyper}

Assume that $X^n=\{f(z_1, \dots, z_{n+1})=0\}\subset\bC^{n+1}$ is a hypersurface of dimension $n\ge 3$ with isolated $\bQ$-Gorenstein klt singularities at $0$. For any $\bx=(x_1, \dots, x_{n+1})\in \mathbb{Q}_{+}^{n+1}$, we can rescale it to an $n$-tuple of integers $\tilde{\bx}\in \bZ_{+}^{n+1}$.  Since the normalized volume is rescaling invariant, we will identify $\bx$ with $\tilde{\bx}$ in this example.
hence it determines a weighted blow-up of $\pi=\pi^{\bx}: Z=\bC_\bx \rightarrow \mathbb{C}^{n+1}$ with exceptional divisor $F$. $F$ is the weighted projective space $\mathbb{P}({\bf x})$. Let $Y=X_{\bf x}$ be the strict transform of $X$ and $\pi|_Y: Y\rightarrow X$ is $v_{\bf x}$-blow up with exceptional divisor $E=F\cap Y$. 
Then $E$ is the hypersurface of weighted degree $v_{\bf x}(f)$ in $\mathbb{P}({\bf x})$. 

$v_{\bf x}$ induces a valuation on $k(X_{\bf x})$ for which we still denoted by $v_{\bf x}$:
\[
v_{\bf x}(f)=\min_{\tilde{f}|_{X_{\bf x}}=f, \tilde{f}\in k(\bC_{\bf x}^{n+1})} v_{\bf x}(\tilde{f}), \quad f\in \mathbb{C}(\bC^{n+1}_{\bf x})
\]
By \cite[Lemma 2.2]{Mar96}, we have: $v_{\bf x}(f)=\lfloor \frac{1}{d_{\Gamma}}\ord_{\Gamma}(f) \rfloor$ where $\Gamma$ is any component of $E=X_{\bf x}\cap F$ of codimension 2 in $F$ such that $X_{\bf x}$ is normal at the generic point of $\Gamma$.

Remember that we can assume $\bx\in \bZ_{+}^{n+1}$ by rescaling. It's easy to see that we have:
\[
K_{Z}+F=\pi^*K_{\bC^{n+1}}+\left(\sum_i x_i\right) F, \quad Y=\pi^*(X)-v_{\bf x}(f) F.
\]
So we get:
\[
K_Z+Y+F=\pi^*(K_{\bC^{n+1}}+X)+\left((\sum_i x_i)-v_{\bf x}(f)\right)F
\]
Taking adjunction, we get:
\[
K_Y+{\rm Diff}_Y(F)=(\pi|_Y)^* K_{X}+A_X(v_{\bf x}) F|_Y.
\]
There are at least two cases one can determine the log discrepancy of the $v_{\bx}$:
\begin{itemize}
\item
If $Y$ is a Cartier divisor in $Z=\bC^{n+1}_{\bx}$, ${\rm Diff}_{Y}(F)=F|_Y$ by \cite[Proposition 4.2]{Kol13}.
\item 
If $\Gamma$ above is not a toric variety, then $A_X\left(\frac{1}{d_\Gamma}\ord_\Gamma\right)=\sum_i x_i -v_{\bx}(f)$ by \cite[Proposition 2.3]{Mar96}
\end{itemize}
The point is that $A_X(v_\bx)=\sum_i x_i-v_{\bx}(f)$ should hold for generic $\bx\in \bQ_{+}^{n+1}$. So in this experimental example, we will (continuously) extend this expression to any $\bx\in \bR_{+}^{n+1}$ 
and calculate the minimum of $\hvol(v_{\bx})$ as a function of $\bx\in \bR_{+}^{n+1}$.
\begin{enumerate}
\item
Consider the $n$-dimensional $A_1$ singularity:
\[
X^n=\{z_1^2+z_2^2+\dots+z_n^2+z_{n+1}^2=0\}\subset \mathbb{C}^{n+1}.
\]
By renaming the variables, we consider the weights $\bx\in \bR_{+}^{n+1}$ satisfying $
x_1\le x_2\le \dots \le x_n$. 
\[
A(v_{\bf x})=x_1+\dots+x_{n+1}-2x_1, \quad 
\]
Modulo $O(r^{n-1})$, it's easy to see that:
\[
\dim_{\bC}R/\fa_r(v_{\bf x})=\frac{1}{\prod_{i=1}^{n+1} x_i}\frac{r^{n+1}}{(n+1)!}-\frac{1}{\prod_{i=1}^{n+1} x_i}\frac{(r-2x_1)^{n+1}}{(n+1)!}=\frac{2x_1}{\prod_{i=1}^{n+1}x_i}\frac{r^n}{n!}.
\]
So $\vol(v_{\bf x})=\frac{2}{\prod_{i=2}^{n+1} x_i}$ and we get:
\begin{eqnarray*}
A_X(v_{\bf x})^n\vol(v_{\bf x})&=&\frac{2(-x_1+\sum_{i=2}^{n+1}x_i)^n}{\prod_{i=2}^{n+1}x_i}\ge 2n^n\left(\frac{-x_1+\sum_{i=2}^{n+1}x_i}{\sum_{i=2}^{n+1}x_i}\right)^n\\
&\ge& 2n^n \left(\frac{-x_1+n x_1}{n x_1}\right)^n=2(n-1)^n.
\end{eqnarray*}
So $\hvol(v_{\bf x})$ obtains the minimum $2(n-1)^n$ at $x=(1,\dots, 1)$.

\item
Consider the $n$-dimensional $A_{k-1}$ singularity ($n\ge 2$):
\[
X^n=A^{n}_{k-1}:=\{z_1^2+z_2^2+\dots+z_n^2+z_{n+1}^{k}=0\}\subset \mathbb{C}^{n+1}.
\]
Again for ${\bf x}=(x_1, \dots, x_{n+1})\in \bR^{n+1}$, we can assume $0<x_1\le x_2\le \dots\le x_{n+1}$. So we get:
\[
A(v_{\bf x})=\sum_{i=1}^{n+1} x_i -\min\{2 x_1, k x_{n+1}\}.
\]
There are two cases to consider.
\begin{enumerate}
\item $2x_1\le k x_{n+1}$. 
Modulo $O(r^{n-1})$ we have:
\[
\dim_{\bC} R/\fa_r(v_{\bf x})=\frac{1}{\prod_{i=1}^{n+1}x_i}\frac{r^{n+1}}{(n+1)!}-\frac{1}{\prod_{i=1}^{n+1} x_i}\frac{(r-2x_1)^n}{(n+1)!}=\frac{2x_1}{\prod_{i=1}^{n+1} x_i}\frac{r^n}{n!}.
\]
So $\vol(v_\bx)=2/\prod_{i=2}^{n+1}x_i$ and using arithmetic-geometric mean inequality we can estimate:
\begin{eqnarray*}
\hvol(v_\bx)&=&\left(x_{n+1}+\sum_{i=2}^{n} x_i-  x_1\right)^n \frac{2}{ x_{n+1} \prod_{i=2}^{n} x_i}\\
&\ge& 2(n-1)^{n-1} \left(\frac{x_{n+1}+\sum_{i=2}^n x_i - x_1}{\sum_{i=2}^nx_i}\right)^n\frac{\sum_{i=2} x_i}{x_{n+1}}.
\end{eqnarray*}
If we define $\alpha=(n-1) x_{n+1}/(\sum_{i=2}^n x_i)$ and $\tau=(n-1)x_1/(\sum_{i=2}^n x_i)\in (0, 1)$, then 
\[
\hvol=2\frac{ (\alpha+(n-1)-\tau)^n}{\alpha}\ge 2\frac{\left(\alpha+n-2\right)^n}{\alpha}=:\phi(\alpha).
\]
It's easy to verify that $\phi(\alpha)$ obtains its minimum $\frac{2n^n(n-2)^{n-1}}{(n-1)^{n-1}}$ at $\alpha_*=\frac{n-2}{n-1}$, is strictly decreasing on $(0, \alpha_*)$ and strictly increasing on $(\alpha_*, +\infty)$. Tracking the equality case of the above estimates, we get that $\hvol$ obtains $\phi(\alpha_*)$ only if ${\bf x}_*=(1, 1,\dots, 1, \frac{n-2}{n-1})$. The constraint is satisfied at the minimizer $\bx_*$ if and only if $2\le k\frac{(n-2)}{n-1}$.

\item $2 x_1\ge k x_{n+1}$. In this case, 
\[
A(v_\bx)=\sum_{i=1}^{n}x_i +x_{n+1}-k x_{n+1}, \quad 
\vol(v_{\bf x})=\frac{k}{\prod_{i=1}^{n}x_i}.
\]
So
\[
\hvol(v_\bx)=\frac{k(\sum_{i=1}^n x_i -(k-1) x_{n+1})^n}{\prod_{i=1}^n x_i}\ge  k n^n \left(\frac{\sum_{i=1}^n x_i-(k-1)x_{n+1}}{\sum_{i=1}^n x_i}\right)^n.
\]
The last expression is a decreasing function of $x_{n+1}$ and hence obtains the minimum at $x_{n+1}=2x_1/k$ which is the right end point of the interval $(0,2x_1/k)$ for $x_{n+1}$ in this case. So $\hvol$ obtains its minimum 
$\frac{\left((n-2)k+2\right)^n}{k^{n-1}}=\phi(2/k)$
at the weight $(1, 1, \dots, 1, \frac{2}{k})$ under the constraint $2 x_1\ge k x_{n+1}$.  
\end{enumerate}

Combining the above discussions, we see that if $2\le k\frac{n-2}{n-1}$, then $\hvol(v_{\bx})$ obtains the minimum $\phi(\alpha_*)$ at weight $(1,\dots, 1, \frac{n-2}{n-1})$. Otherwise, the minimum is $\phi(2/k)$ and is obtained at the weight $(1,\dots, 1, 2/k)$. We can organize the situation in the following table. 
\vskip 2mm
\begin{center}
\renewcommand*{\arraystretch}{1.5}
\begin{tabular}{|ctc|c|c|c}
\hline
\diagbox[width=2.5em]{$k$\quad}{$n$}
& $2$ & $3$ & $4$ & $n\ge 5$ \\
\thickhline
$1$ & $(1,1,2)$     &    $(1,1,1,2)$      &  $(1,1,1,1,2)$  & $(1,\dots, 1, 2)$ \\
$2$& $(1,1,1)$     &    $(1,1,1,1)$      &  $(1,1,1,1,1)$  & $(1, \dots, 1, 1)$ \\  \cline{5-5}
$3$& $(1,1,2/3)$  &    $(1,1,1,2/3))$   & $\boxed{(1,1,1,1,2/3)}$ & $(1, \dots, 1, \frac{n-2}{n-1})$ \\ \cline{4-4}
$4$& $(1,1,1/2)$ &     $\boxed{(1,1,1,1/2)}$    & $(1,1,1,1,2/3)$ & $(1, \dots, 1, \frac{n-2}{n-1})$ \\ \cline{3-3}
$k\ge 5$& $(1,1, 2/k)$ & $(1,1,1, 1/2)$ & $(1,1,1,1,2/3)$ & $(1, \dots, 1, \frac{n-2}{n-1})$ \\
\hline
\end{tabular}
\end{center}
\vskip 2mm
Surprisingly, this table matches a table (pointed to us by H-J. Hein) coming out of the study of Ricci-flat K\"{a}hler cone metrics or the closely related Sasaki-Einstein metrics. This 
motivates one of our conjectures in Conjecture \ref{mainconj}. 
For comparison and for the reader's convenience, we give some discussion analogous to that in \cite[Section 3.3]{DS15}. We denote by $A^n_{k-1}$ the $n$-dimensional 
$A_{k-1}$ singularities considered above. Then we have the following cases:
\begin{enumerate}
\item $n=2$. For any $k\ge 1$, $A^2_{k-1}\cong \bC^2/\mathbb{Z}_{k-1}$ has a flat quotient metric. 

\item $k=1$.  For any $n\ge 2$, $A^n_0=\bC^n$ has the standard flat metric. 

\item $k=2$. For any $n\ge 2$, $A^n_1$ has a rotationally symmetric Ricci-flat cone metric which could be obtained by solving a simple
ODE using Calabi's ansatz. 

\item $(n,k)=(3,3)$. There is a Ricci-flat cone K\"{a}hler metric with the weight of the $\bC^*$-action given by $(1,1, 1, 2/3)$. 
This is nontrivial and was first proved in \cite{LS14} (see also \cite{Li15}).

\item $(n,k)=(3,4)$ or $(4,3)$. For these two special weights (boxed weights in the table), the corresponding singularities are K-semistable but not K-polystable. More precisely, if $D^2$ (resp. $D^3$) denotes the smooth
quadric hypersurface in $\bP^{2}$ (resp. $\bP^3$), then the associated pair $(\bP^{n-1}, (1-\frac{1}{k})D)$  
is log-K-semistable but not log-K-polystable by \cite{LS14, Li13}. 

\item For all the other cases (corresponding to all the weights below the short horizontal segments), we have $2<k\frac{n-2}{n-1}$. It was shown in \cite{GMSY07} that there is no (obvious) Sasaki-Einstein metric on the corresponding $A^n_{k-1}$ singularity.
On the other hand, Hein-Naber recently showed in \cite{HN} that there is a local (non-conic) Ricci-flat metric near the vertex $o\in A^n_{k-1}$ whose metric tangent cone at $o$ is $A^{n-1}_1\times \bC$ with
the product Ricci flat metric. The K\"{a}hler potential of the Ricci flat metric is given by 
$\left(|z_1|^2+\cdots+|z_n|^2\right)^{\frac{n-2}{n-1}}+|z_{n+1}|^2$.
It was then observed in \cite{DS15} that the weights $(\frac{n-1}{n-2}, \dots, \frac{n-1}{n-2}, 1)=\frac{n-1}{n-2}(1,\dots, 1, \frac{n-2}{n-1})$ indeed degenerates $A^n_{k-1}$ into $A^{n-1}_1\times \bC$ in these cases, i.e. when $k>2\frac{n-1}{n-2}$.

\end{enumerate}

In the Appendix I, we calculate the candidate minimizing valuations for $D$-type and $E$-type singularities
\end{enumerate}

\end{examp}

\section{Relation to Fujita's divisorial stability }\label{secFujita}

In this section, we carry out calculations to show that there is a close relation between minimization of $\hvol$ with Fujita's divisorial stability (\cite{Fuj15a}) which is a consequence of K-stability (\cite{Tia97}, \cite{Don02}, see also \cite{Ber12}). The calculations will essentially show that the derivative of $\hvol$ at the canonial valuation on the cone along some  directions of $\bC^*$-invariant valuations is given by Fujita's invariant on the base. Note that Fujita's invariant is an example of CM weight (\cite{Tia97}, \cite{Don02}) which is a generalization of the orginal Futaki invariant (\cite{Fut83}) to the setting of degenerations. So our calculation is a reflection of the calculation of Martelli-Sparks-Yau (see \cite{MSY08}) by which they showed that the derivative of the (normalized) volume function at a regular Reeb vector field is the classical Futaki invariant. Although this point will be developed in more generality in \cite{Li15b}, to the author the calculation here was an initial evidence of the validity of our theory beyond the motivations from Sasaki geometry recalled earlier. 

From now on let $V$ be a $\mathbb{Q}$-Fano variety with $\bQ$-Gorenstein klt singularities. We first recall Fujita's divisorial semistability.
\begin{defn}[{\cite[Definition 1.1]{Fuj15a}}]\label{def-Fuj}
Let $D$ be a nonzero effective Weil divisor on $V$. The pair $(V, -K_V)$ is said to be divisorial semistable along $D$ if the value:
\[
\eta(D)=\Vol_V(-K_V)-\int_0^{+\infty} \Vol_V(-K_V-xD)dx
\]
satisfies $\eta(D)\ge 0$, where $\Vol_V$ is the volume function on divisors (see \cite{Laz96}).
\end{defn}
Assume $-K_V\sim_{\mathbb{Q}} rL$ for some Cartier divisor $L$ and $r>0$. Let $X=C(V, L)$ be the affine cone over $V$ with polarization $L$. Let $\pi_0: W=Bl_oX\rightarrow X$ be the blow-up of $X$ at $o$ with the exceptional divisor still denoted by $V$. Let $D$ be a prime divisor on $V$. We think $D$ as an irreducible divisor contained in the exceptional divisor $V$ and consider the blow up $\pi_1: Y:=Bl_DW\rightarrow W$ with the exceptional divisor $E_1$. Let $\hV$ or $E_0$ denote the strict transform of $V$. Then $\pi:=\pi_1\circ\pi_0: (Y, \hV+E_1)\rightarrow (X, o)$ is a birational morphism and there are two divisorial valuations assocated to $\hV$ and $E_1$ respectively. We will compare the normalized volume of these two valuations.
For $v_{0}:=\ord_{\hV}=\ord_{V}$, it's easy to see that 
\begin{equation}\label{volv0}
\vol(v_{0})=L^{n-1}.
\end{equation}
For $v_{1}:=\ord_{E_1}$, by similar calculations as in \cite[(11)]{Kur03} and \cite{Fuj15a} (see also \cite{Li15b}), we get (compare \eqref{volvalpha}) 
\begin{lem}
\begin{equation}\label{volvinfinity}
\vol(v_1)=L^{n-1}-n\int_{0}^{+\infty}{\rm Vol}(L-xD)\frac{ dx}{(1+x)^{n+1}}.
\end{equation}
\end{lem}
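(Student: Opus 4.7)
\smallskip

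\noindent\textbf{Proof plan.} The strategy is to identify the valuation $v_1=\ord_{E_1}$ explicitly on each graded piece $R_m=H^0(V,mL)$ of the cone $X=\mathrm{Spec}\,R$, read off the valuation ideals $\mathfrak{a}_r(v_1)$, and compute $\vol(v_1)$ as an asymptotic double sum that turns into the stated integral after a substitution.

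\smallskip

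\emph{Step 1: local description of $v_1$ on $R_m$.} Pick a generic point of $D$, and choose local coordinates $(t,s,\dots)$ on $W$ such that the exceptional divisor $V$ of $\pi_0$ is $\{t=0\}$ and $D\subset V$ is cut out by $(t,s)$. Because $\pi_1$ is the blow-up of the codimension-two smooth ideal $(t,s)$, the exceptional divisor $E_1$ is the monomial valuation $\ord_{E_1}(t)=\ord_{E_1}(s)=1$. A section $f\in R_m=H^0(V,mL)$, regarded as a regular function on $X$, pulls back to $\pi_0^*f = t^m\cdot \widetilde f$ on $W$, where $\widetilde f$ is a local expression for the section $f|_V$ in our chosen trivialization. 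Therefore
\[
v_1(f)\;=\;\ord_{E_1}(t^m\widetilde f)\;=\;m+\ord_D(f|_V),
\]
where $\ord_D$ denotes the order of vanishing along $D$ of the section $f\in H^0(V,mL)$.

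\smallskip

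\emph{Step 2: computing $\mathfrak a_r(v_1)$ and $\dim R/\mathfrak a_r(v_1)$.} From Step~1, for $m<r$ one has $R_m\cap \mathfrak a_r(v_1)=H^0(V,mL-(r-m)D)$, while for $m\ge r$ one has $R_m\subset \mathfrak a_r(v_1)$. Hence
\[
\dim_{\bC} R/\mathfrak a_r(v_1)\;=\;\sum_{m=0}^{r-1}\Bigl(h^0(V,mL)-h^0\bigl(V,mL-(r-m)D\bigr)\Bigr).
\]

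\smallskip

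\emph{Step 3: asymptotic limit.} The first summation is standard: $\sum_{m=0}^{r-1}h^0(mL)\sim L^{n-1}\,r^n/n!$, giving the leading term $L^{n-1}$. For the second summation, write $mL-(r-m)D=m\bigl(L-\tfrac{r-m}{m}D\bigr)$ and invoke the definition of the volume function on the $(n-1)$-fold $V$, namely $h^0(V,m(L-xD))\sim \Vol(L-xD)\,m^{n-1}/(n-1)!$, to recognize the sum as a Riemann sum. The substitution $x=(r-m)/m$, i.e.\ $m=r/(1+x)$ with $dm=-r\,dx/(1+x)^2$, converts it into
\[
\sum_{m=1}^{r-1}h^0\bigl(mL-(r-m)D\bigr)\;\sim\;\frac{r^n}{(n-1)!}\int_0^{+\infty}\frac{\Vol(L-xD)}{(1+x)^{n+1}}\,dx.
\]
Multiplying by $n!/r^n$ and subtracting from the first term yields the desired formula
\[
\vol(v_1)\;=\;L^{n-1}-n\int_{0}^{+\infty}\Vol(L-xD)\,\frac{dx}{(1+x)^{n+1}}.
\]

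\smallskip

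\emph{Main obstacle.} The delicate point is Step~3: in the Riemann sum the twisting parameter $x=(r-m)/m$ and the "dilation" $m$ vary simultaneously, so one needs a uniform asymptotic rather than a pointwise one. This is handled by the continuity of $\Vol$ on $N^1(V)_{\bR}$ (Lazarsfeld) together with a uniform big-divisor Riemann--Roch estimate; the tail $x$ large is trivially controlled since $\Vol(L-xD)=0$ once $x$ exceeds the pseudo-effective threshold of $L$ relative to $D$. This is precisely the type of asymptotic that Fujita and Kuronya use in the computations referenced in the excerpt, and the argument is the same one that gives equation~\eqref{volv0} after specialization to $D=0$.
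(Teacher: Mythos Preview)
Your proposal is correct and follows essentially the same route as the paper: identify $v_1(f)=m+\ord_D(f)$ on each graded piece $R_m$, read off $\mathfrak a_r\cap R_m=H^0(V,mL-(r-m)D)$, and pass to the asymptotic integral. The paper in fact carries out this computation for the whole family $w_\alpha$ (your formula is the case $\alpha=0$ of \eqref{volvalpha}), and makes Step~3 rigorous via Fujita's finite chamber decomposition of $[0,\tau(D)]$ coming from the Mori dream space structure, rather than appealing abstractly to continuity of $\Vol$; but this is a difference of bookkeeping, not of strategy.
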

To find the log discrepancy, we notice that:
\[
K_W=\pi_0^* K_X+(r-1) V, \quad K_Y=\pi_1^* K_W+E_1, \quad \pi_1^* V=\hV+E_1.
\]
Recall that $E_0=\hV$ is the strict transform of $V$ under $\pi_1$. The second identity is because $D$ is codimension $2$ inside $W$. 
So we get:
\begin{equation}\label{discsing}
K_W=\pi^* K_X+(r-1)\hV+r E_1.
\end{equation}
So we get $A_X(V)=r$ and $A_X(E_1)=r+1$.
\begin{prop}\label{thmsing}
In the above setting, the following statements hold.
\begin{enumerate}
\item
If $\ord_{V}$ minimizes $\hvol$, then $V$ is divisorial semistable along any prime divisor $D$.
\item
If $V$ is divisorial semistable along a prime divisor $D$, then $\hvol(\ord_{E_1})\ge \hvol(\ord_{V})=L^{n-1}$, where $E_1$ is the exceptional divisor over $o\in X=C(V, L)$ associated to $D$.
\end{enumerate}
\end{prop}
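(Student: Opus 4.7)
The plan hinges on the explicit formulas $A_X(\ord_V)=r$ and $A_X(\ord_{E_1})=r+1$ deduced from \eqref{discsing}, combined with \eqref{volv0} and \eqref{volvinfinity}, giving $\hvol(\ord_V)=r^n L^{n-1}$ and
\[
\hvol(\ord_{E_1}) = (r+1)^n\Bigl[L^{n-1} - n\int_0^{\infty}\Vol(L-xD)\frac{dx}{(1+x)^{n+1}}\Bigr].
\]

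For part (2), the inequality $\hvol(\ord_{E_1})\ge r^n L^{n-1}$ simplifies, after division by $(r+1)^n$, to $n\int_0^\infty \Vol(L-xD)(1+x)^{-(n+1)}dx \le [1-(r/(r+1))^n]L^{n-1}$. Integration by parts with antiderivative $-(1+x)^{-n}/n$ and $g(x):=\Vol(L-xD)$, $g(0)=L^{n-1}$, rewrites the left-hand side as $L^{n-1}-\int_0^\infty(-g'(x))(1+x)^{-n}dx$, so the target becomes
\[
\int_0^\infty (-g'(x))(1+x)^{-n}\,dx \;\ge\; \bigl(r/(r+1)\bigr)^n\, L^{n-1}.
\]
Since $g$ is non-increasing with $g(0)=L^{n-1}$, the measure $d\mu:=(-g'(x))\,dx/L^{n-1}$ is a probability measure on $[0,\infty)$, and the divisorial semistability condition $\eta(D)\ge 0$ translates, via another integration by parts and the rescaling $y=x/r$, into the first-moment bound $\int x\,d\mu \le 1/r$. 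Applying Jensen's inequality to the convex function $\phi(u)=(1+u)^{-n}$ now yields $\int(1+u)^{-n}\,d\mu \ge \bigl(1+\int u\,d\mu\bigr)^{-n}\ge (r/(r+1))^n$, which is precisely what is required.

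For part (1), the plan is to detect divisorial semistability through the one-sided derivative of $\hvol$ at $\ord_V$ along a natural deformation direction. Consider the quasi-monomial family $\{v_c\}_{c\ge 0}\subset {\rm QM}_\eta(Y,\hat V+E_1)$ with weights $(1,c)$ along $(\hat V,E_1)$; then $v_0=\ord_V$ and $A_X(v_c)=r+(r+1)c$ by \eqref{ldquasi}. Using the cone decomposition $R=\bigoplus_k H^0(V,kL)$ (a pure term $h\cdot w^k$ with $\ord_D(h)=j$ has $v_c$-value $k(c+1)+jc$) and the asymptotic $\dim H^0(V,kL-jD)\sim k^{n-1}\Vol(L-(j/k)D)/(n-1)!$, one arrives, after the change of variable $x=(1-s)(c+1)/(cs)$, at the representation
\[
\hvol(v_c)=nc\,(r+(r+1)c)^n\int_0^\infty \frac{L^{n-1}-\Vol(L-xD)}{(xc+c+1)^{n+1}}\,dx.
\]
Splitting the integral at the pseudo-effective threshold $\tau$ of $L$ relative to $D$ isolates the analytically singular contribution at $c=0^+$; a first-order expansion in $c$ then gives, after the $\tau$-dependent boundary terms cancel, the clean identity
\[
\frac{d}{dc}\bigg|_{c=0^+}\hvol(v_c)=n\cdot\eta(D).
\]
Minimality of $\ord_V$ forces $\hvol(v_c)\ge \hvol(v_0)$ for all $c\ge 0$, so the one-sided derivative is non-negative, hence $\eta(D)\ge 0$.

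The main obstacle will be the rigorous derivation of the volume formula for $v_c$: one must control the asymptotics of $\dim H^0(V,kL-jD)$ uniformly as $(k,j)$ sweeps the wedge in $\bR_{\ge 0}^2$ cut out by the weight $(1,c)$, and justify passing from the discrete Riemann sums to the continuous integral, with particular care near the pseudo-effective boundary where $\Vol(L-xD)$ vanishes. Once this integral representation is secured, both the Jensen step for part (2) and the derivative calculation for part (1) become essentially elementary.
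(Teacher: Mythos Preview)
Your proposal is correct. For part (1) your approach coincides with the paper's: both differentiate $\hvol$ along the quasi-monomial family joining $\ord_V$ to $\ord_{E_1}$ and identify the one-sided derivative at $\ord_V$ with $n\,\eta(D)$. Your parametrization by weights $(1,c)$ is the paper's $(\alpha,1)$ rescaled (with $c=\beta=\alpha^{-1}$), and your integral expression for $\hvol(v_c)$ agrees with the paper's $\Phi(\beta)$. The obstacle you flag---controlling $h^0(V,kL-jD)$ uniformly---is handled in the paper not by Riemann-sum estimates but via the Mori dream space chamber decomposition of the pseudo-effective cone (following \cite{Fuj15a} and \cite{KKL12}), which reduces the count to finitely many polynomial pieces and makes the passage to the integral immediate.

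For part (2) your route differs from the paper's. After the same integration by parts expressing $\hvol(\ord_{E_1})=(r+1)^n\int_0^\infty(-g'(x))(1+x)^{-n}\,dx$, the paper reparametrizes the whole family by $t\in[0,1]$ and shows $t\mapsto\hvol(\tilde v_t)$ is \emph{convex} (each integrand $(r+1+(rx-1)t)^{-n}$ being convex in $t$), so that $f'(0)\ge 0$ forces $f(1)\ge f(0)$. You instead compare the two endpoints directly by Jensen's inequality applied to the convex function $x\mapsto(1+x)^{-n}$ against the probability measure $d\mu=-g'(x)\,dx/L^{n-1}$, using that $\eta(D)\ge 0$ bounds the first moment $\int x\,d\mu\le 1/r$. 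Your argument is more economical---it never invokes the interpolating family for part (2)---while the paper's yields the stronger conclusion that $\hvol$ is convex along the entire segment, information relevant to later uniqueness considerations.
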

\begin{proof}
To compare the two normalized volume, we will first calculate the normalized volume function for quasi-monomial valuations on $(Y, \hV+E_1)$. By section \ref{secval}, for any vector $(\alpha, 1)$ with $\alpha>0$, we have a valuation $w_\alpha$ satisfying:
\begin{enumerate}
\item
$w_\alpha$ can be defined by the following condition:
\[
w_\alpha(f)\ge \alpha\cdot i+ (i+j), \mbox{ for any } f\in H^0(V, iL-jD).
\]
The identity holds if and only if $f\not\in H^0(V, iL-(j+1)D)$. 
\item $w_\alpha$ interpolates $v_0$ and $v_1$ in the normalized sense: 
\[
\lim_{\alpha\rightarrow +\infty}\frac{w_\alpha}{\alpha}=v_0, \quad \lim_{\alpha\rightarrow 0}w_\alpha=v_1.
\]
\end{enumerate}
From item 1 above, we know that $\fa_{\alpha\cdot i+(i+j)}(w_\alpha)\cap H^0(iL)=H^0(iL-jD)$. Notice that $w_\alpha$ are all $\bC^*$-invariant valuations. So the co-length of $\fa_{m+1}(w_\alpha)$ is equal to:
\[
\sum_{i=0}^{m/(\alpha+1)}\left(h^0(iL)-h^0(iL-(m-(\alpha+1)i)D)\right).
\]
It's easy to see that, modulo $m^{n-1}$, we have:
\begin{eqnarray*}
n!\sum_{i=0}^{m/(\alpha+1)}h^0(iL)=L^{n-1}\left(\frac{m}{\alpha+1}\right)^n.
\end{eqnarray*}
For the other sum, we calculate it following the method in \cite{Fuj15a}. Let $\tau(D)$ be the pseudo-effective threshold of $D$ with
respect to $(V, L)$. Then because $V$ (or its projective small $\mathbb{Q}$-factorial modification) is a Mori dream space (see \cite[Lemma 2.3]{Fuj15a}), by \cite[Theorem 4.2]{KKL12} (see \cite[Theorem 2.5]{Fuj15a}), there exist
\begin{enumerate}
\item 
an increasing sequence of rational 
numbers 
\[
0=\tau_0<\tau_1<\dots<\tau_N=\tau(D), 
\]
\item
a normal projective varieties $V_1, \dots, V_N$, and
\item
mutually distinct birational contraction maps $\phi_k: V\dashrightarrow V_k$
\end{enumerate}
such that the following hold:
\begin{enumerate}
\item for any $x\in [\tau_{k-1}, \tau_k)$, the map $\phi_k$ is a semiample model of $L-xD$, and 
\item if $x\in (\tau_{k-1}, \tau_k)$, then the map $\phi_k$ is the ample model of $L-xD$.
\end{enumerate}
For $0\le k\le N$, we define the thresholds $I(k)= m/(\alpha+1+\tau_k) $ such that
\[
\frac{m}{\alpha+1}=I(0)> I(1)>\dots\ge I(N)> I(N+1)=:0.
\]
If $I(k+1)\le i\le I(k)$ (resp. $I(k+1)<i<I(k)$), then $(m-(\alpha+1)i)/i\in [\tau_k, \tau_{k+1}]$ (resp. $(\tau_k, \tau_{k+1})$) so that 
$iL-(m-(\alpha+1)i)D=i\left(L-\frac{m-(\alpha+1)i}{i} D\right)$ is semiample (resp. ample) on $V_k$. Then {\it modulo $m^{n-1}$}, we have (see \cite[Section 4, 5]{Fuj15a}):
\begin{eqnarray*}
&&n!\sum_{i=1}^{m/(\alpha+1)} h^0(V, iL-(m-(\alpha+1)i)D)\\
&=&n! \sum_{k=0}^{N-1}\sum_{i=\lfloor I(k+1)\rfloor}^{\lfloor I(k)\rfloor-1} h^0(V_k, iL_k-(m-(\alpha+1)i)D_k)+O(m^{n-1})\\
&=&n! \sum_{k=0}^{N-1}\sum_{i=\lfloor I(k+1)\rfloor}^{\lfloor I(k)\rfloor-1}\chi(V_k, iL_k-(m-(\alpha+1)i)D_k) +O(m^{n-1})\\
&=&n\sum_{k=0}^{N-1} \int_{I(k+1)}^{I(k)}(s L_k-(m-(\alpha+1)s) D_k)^{n-1}ds+O(m^{n-1})\\
&=&n\sum_{k=0}^{N-1} \int_{I(k)}^{I(k+1)} \Vol_{V}(sL-(m-(\alpha+1)s)D)^{n-1}ds+O(m^{n-1})\\
&=&n\int_0^{\frac{m}{\alpha+1}}\Vol_V(sL-(m-(\alpha+1)s)D)ds+O(m^{n-1})\\
&=&m^n n \int_0^{+\infty}{\rm Vol}_V(L-xD)\frac{dx}{(\alpha+1+x)^{n+1}}.
\end{eqnarray*}
So we get:
\begin{equation}\label{volvalpha}
\vol(w_\alpha)=\frac{L^{n-1}}{(\alpha+1)^n}-n\int_0^{+\infty}\Vol(L-xD)\frac{dx}{(\alpha+1+x)^{n+1}}.
\end{equation}
Notice that $(1+\alpha)^n\vol(w_\alpha)=\vol(w_\alpha/(1+\alpha))$  interpolates between \eqref{volv0} and \eqref{volvinfinity}.

For the log discrepancy of $w_\alpha$, by the definition \eqref{ldquasi} in Section \ref{secval} and \eqref{discsing}, we get:
\[
A(w_\alpha)=\alpha r+(r+1).
\]
So we get the normalized volume function:
\begin{eqnarray}\label{eq-Phibeta}
\hvol(w_\alpha)&=&A(w_\alpha)^n\cdot \vol(w_\alpha)\nonumber\\
&=&\frac{(\alpha r+(r+1))^n}{(\alpha+1)^n}L^{n-1}-n
\int_0^{\infty}\Vol(L-xD)\frac{(\alpha r+(r+1))^n}{(\alpha+1+x)^{n+1}}dx\nonumber\\
&=&\left(\frac{r+(r+1)\beta}{1+\beta}\right)^nL^{n-1}-n\beta\int_0^\infty
\Vol(L-xD)\frac{(r+(r+1)\beta)^n}{(1+(1+x)\beta)^{n+1}}dx\nonumber\\
&=:& \Phi(\beta),
\end{eqnarray}
where we substituted $\beta=\alpha^{-1}$ such that
$\Phi(0)=\hvol(v_{0})$ and $\Phi(+\infty)=\hvol(v_1)$.
We calculate the directional derivative $\Phi'(0)$:
\begin{eqnarray*}
\Phi'(0)&=&n r^{n-1} L^{n-1}- n r^{n}\int_0^{\infty}\Vol(L-xD) dx\\
&=& n\left( (-K_V)^{n-1}-\int_0^{\infty} \Vol(-K_V-xD) dx\right).
\end{eqnarray*}
Notice that the expression in the last bracket is exactly Fujita's invariant $\eta(D)$ in Definition \ref{def-Fuj}. So if $\eta(D)< 0$, then $\Phi'(0)< 0$ so that
$\hvol(w_\alpha)< \hvol(v_0)$ if $\beta=\alpha^{-1}$ is sufficiently small. This contradicts the assumption that $\hvol(v_0)$ is the minimum. So we get the first statement of Theorem \ref{thmsing}.

To see the second statement, we can write 
\[
\frac{(\alpha, 1)}{\alpha r+r+1}=\frac{\alpha r}{\alpha r+r+1}\frac{(1,0)}{r}+\frac{r+1}{\alpha r+r+1}\frac{(0,1)}{r+1}=:(1-t)\frac{(1,0)}{r}+t\frac{(0,1)}{r+1},
\]
where we introduced a new variable $t=\frac{r+1}{\alpha r+r+1}$, or equivalently $\alpha=\beta^{-1}=\frac{1-t}{t}\frac{r+1}{r}$. So we get
$w_\alpha/(\alpha r+r+1)=:\tilde{v}_t$ is a ``linear" interpolation between $\tilde{v}_0=v_0/r$ and $\tilde{v}_1=v_1/(r+1)$. Notice that $A(\tilde{v}_t)\equiv 1$.

Denote $f(t):=\hvol(\tilde{v}_t)=\Phi(\beta(t))$. The directional derivative of $f(t)$ at $t=0$ is calculated by the chain rule:
\[
f'(0)=\Phi'(0)\beta'(0)=n\cdot \eta(D)\cdot \frac{r}{r+1}.
\]
We claim that $f(t)$ is a convex function of $t$. Assuming the claim, we know that $f'(0)\ge 0$ implies $f(1)\ge f(0)$. So we obtain the second statement of \ref{thmsing}. To verify the claim, we re-write the formula of $\Phi(\beta)$ in \eqref{eq-Phibeta} by the integration by parts:
\begin{eqnarray*}
\frac{\Phi(\beta)}{(r+(r+1)\beta)^n}&=&\frac{L^n}{(1+\beta)^n}+\left.\frac{\Vol(L-xD)}{(1+(1+x)\beta)^n}\right|_{x=0}^{+\infty}-\int_0^{+\infty}\frac{d \Vol(L-xD)}{(1+(1+x)\beta)^n}\\
&=&-\int^{+\infty}_0 \frac{d \Vol(L-xD)}{(1+(1+x)\beta)^n}.
\end{eqnarray*}
So, with $\beta=\frac{t r}{(1-t)(r+1)}$, we easily get:
\begin{eqnarray*}
f(t)&=&\Phi(\beta(t))=r^n(r+1)^n\int^{+\infty}_0 \frac{-d \Vol(L-xD)}{(r+1+(rx-1)t)^n}.
\end{eqnarray*}
Because $\Vol(L-xD)$ is a decreasing function of $x\in [0, +\infty)$, $-d \Vol(L-xD)$ is a measure with positive density with respect to the Lebesgue measure $dx$. So the claim follows from the fact that
$t\mapsto (r+1+(rx-1)t)^{-n}$ is a convex function on $[0, 1]$.

\end{proof}

\section{Questions and discussions}

\subsection{Some conjectures}\label{sec-conj}
There are several natural questions that deserve further studies. We collect them into following conjectures which we plan to study. 
The calculations in this paper should be viewed as evidences for these conjectures.
\begin{conj}\label{mainconj}
\begin{enumerate}
\item
{\bf Hypothesis C} is true. As a consequence, for any germ of $\bQ$-Gorenstein klt singularity $(X,o)$, there exists a minimizer (denoted by $v_*=v_*(X,o)$) of $\hvol(v)$ by Corollary \ref{maincor}. 
\item
$v_*$ is unique up to positive rescaling. 
\item 
$v_*$ is always a quasi-monomial valuation.
\item 
Let $V$ be a Fano manifold, and let $X=C(V, -K_V)$. Then $V$ is K-semistable if and only if on the cone singularity $(X,o)$, $\hvol(v)$ is minimized at $\ord_V$ where $V$ is considered as the exceptional divisor of the blow up $\pi: Bl_oX\rightarrow X$. 
\item
Assume $(X,o)$ is a $\bQ$-Gorenstein klt singularity on a K\"{a}hler-Einstein Fano variety $(X, \omega_{\rm KE})$. The minimizer $v_*(X,o)$ for $(X,o)$ is exactly the weight function that gives the metric tangent cone at $o\in (X, d_{\omega_{\rm KE}})$. More precisesly, we consider the the associated graded algebra of $v_*$:
\[
{\rm gr}_{v_*}R=\bigoplus_{m\in \Phi} \fa_m(v_*)/\fa_{>m}(v_*),
\]
where $\Phi$ is the valuation semigroup of $v_*$. Then the conjecture is that ${\rm gr}_{v_*}R$ is finitely generated and normal, and ${\rm Spec}\left({\rm gr}_{v_*}R\right)$ specially degenerates to the metric tangent cone at $(X, o)$. If true, this is an answer to a question 
of Donaldson-Sun \cite{DS15}. 

\item
For (Newton non-degenerate) hypersurface klt singularities in $\bC^{n+1}$, the global minimizers of $\hvol$ can be found among the valuations induced by weighted blow ups of the ambient $\bC^{n+1}$.
\end{enumerate}
\end{conj}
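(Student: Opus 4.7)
The plan is to attack the six parts of Conjecture~\ref{mainconj} by a coordinated strategy mixing pluripotential theory on the valuation space, birational geometry (MMP/finite generation), and the analytic theory of K\"ahler--Einstein metrics. For parts (1)--(3), my starting point is Remark~\ref{remcont}: on a fixed log smooth model $(Y,D)$ the function $\vol$ is continuous on $\mathrm{QM}(Y,D)$ by Boucksom--Favre--Jonsson, and $A_X$ is lower semicontinuous by Jonsson--Musta\c{t}\u{a}, so $\hvol$ is already lower semicontinuous on each finite-dimensional cone $\mathrm{QM}(Y,D)$. To globalize, I would compare $\hvol(v)$ with $\hvol(r_{Y,D}v)$ under the retraction maps $r_{Y,D}\colon \Val_{X,o}\to \mathrm{QM}(Y,D)$, hoping to show that passing to the retraction does not increase $\hvol$ on a cofinal family of models; this would simultaneously force any minimizer to be quasi-monomial (part 3) and would let the lower semicontinuity on quasi-monomial loci propagate to $\Val_{X,o}$ (part 1). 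For uniqueness (part 2) I would look for a convexity principle: on the toric Reeb cone, $\hvol$ is literally a strictly convex function up to rescaling, and the interpolating families $w_\alpha$ constructed in Section~\ref{secFujita} suggest that $\log\vol$ is concave along suitable one-parameter ``geodesics'' of valuations.

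For part (4), the ``only if'' direction is essentially contained in Proposition~\ref{thmsing}: minimality of $\ord_V$ forces Fujita's invariant $\eta(D)\ge 0$ for every prime divisor $D\subset V$, and Fujita has shown that divisorial semistability is equivalent to K-semistability for Fano manifolds. For the ``if'' direction one must promote the non-negativity of the derivative $\Phi'(0)$ from divisorial test valuations to \emph{all} $v\in\Val_{X,o}$. My plan is to interpolate an arbitrary valuation with $\ord_V$ via a path $v_t$ of $\bC^*$-invariant valuations on the cone, identify $\tfrac{d}{dt}\hvol(v_t)|_{t=0}$ with a generalized CM weight of an equivariant test configuration of $V$, and then invoke K-semistability to conclude the derivative is non-negative; combined with the convexity proposed in part (2), this would yield global minimality.

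Parts (5) and (6) are of a rather different flavor. For part (5) I would combine Donaldson--Sun's two-step description of the metric tangent cone with the minimization property already established on the analytic side: their ``step~1'' cone carries a natural Reeb weight satisfying a Sasaki--Einstein-type volume minimization, and the task is to identify this weight algebraically with our $v_*$ by using the uniqueness from part (2) together with the finite generation and normality of $\mathrm{gr}_{v_*}R$. For part (6), in the Newton-nondegenerate hypersurface case I would use the Newton polytope to ``monomialize'' an arbitrary valuation: replacing $v$ by the monomial valuation $\tilde v$ with the same weights on $z_1,\dots,z_{n+1}$ should weakly decrease $\hvol$, reducing the global minimization to the finite-dimensional convex problem on $\bR^{n+1}_{>0}$ already computed in Example~\ref{exhyper}.

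The hardest single step, I expect, is the uniqueness assertion (2), because no canonical geodesic structure on $\Val_{X,o}$ is yet known outside the toric case. A successful proof will likely require either (a) an intrinsic log-concavity statement for $v\mapsto \log\vol(v)$ along the ``linear'' families $w_\alpha$ of Section~\ref{secFujita}, generalizing the Brunn--Minkowski-type inequalities of Boucksom--Favre--Jonsson, or (b) a purely algebraic rigidity statement to the effect that any two minimizers induce isomorphic filtered graded algebras $\mathrm{gr}_{v_*}R$. Either route would simultaneously upgrade the lower semicontinuity of (1) and feed into (4) and (5), so I would concentrate effort on establishing such a convexity/rigidity principle as the keystone of the program.
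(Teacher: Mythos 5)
The statement you are addressing is Conjecture \ref{mainconj}: the paper offers no proof of it, only supporting evidence (the properness estimate of Theorem \ref{proper}, the reduction of existence to {\bf Hypothesis C} in Corollary \ref{maincor}, and the computations in Section \ref{SecExmp} and the appendices). So there is no argument in the paper to compare yours against, and what you have written is a research program, not a proof. That is not a defect of exposition but of substance: every load-bearing step in your outline is itself an open problem at the level of this paper. Most concretely, your globalization step for parts (1) and (3) rests on the hope that the retraction $r_{Y,D}$ does not increase $\hvol$. This is not at all clear: since $r_{Y,D}(v)\le v$ as functions on $\cO_{X,o}$, one has $\fa_r(r_{Y,D}v)\subseteq \fa_r(v)$ and hence $\vol(r_{Y,D}v)\ge \vol(v)$, while $A_X(r_{Y,D}v)\le A_X(v)$; the two factors of $\hvol$ move in opposite directions under retraction, so the desired monotonicity requires a genuinely new quantitative comparison, not just the continuity results of \cite{BFJ12} and \cite{JM10} quoted in Remark \ref{remcont}. (The existence of minimizers was eventually proved by Blum by a different route, via graded sequences of ideals and generic limits, precisely because this monotonicity is not available.)

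The remaining parts have gaps of the same character. For uniqueness (2) you correctly identify that no convexity or geodesic structure on $\Val_{X,o}$ is known, but you offer no candidate construction beyond the one-parameter families $w_\alpha$ of Section \ref{secFujita}, which only interpolate between two specific divisorial valuations on a cone; there is no argument that such paths connect arbitrary pairs of valuations or that $\log\vol$ is concave along them. For part (4), the ``if'' direction requires controlling $\hvol$ against \emph{all} valuations centered at the vertex, not merely the $\bC^*$-invariant divisorial ones arising from prime divisors on $V$; identifying the derivative of $\hvol$ along a general path with a CM weight of a test configuration is exactly the content of the follow-up work \cite{Li15b} and is not established here. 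For part (5), the finite generation and normality of ${\rm gr}_{v_*}R$ is itself stated as part of the conjecture, so it cannot be invoked as an input to identify $v_*$ with the Donaldson--Sun weight. In short, your proposal is a sensible map of the territory, consistent with how the field subsequently developed, but none of its six parts is proved, and the keystone steps you flag (retraction monotonicity, convexity/rigidity, finite generation) are precisely the points where new ideas were and are required.
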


{\bf Postscript Note:} 
After the initial writing of this paper, there have been progresses on the above conjectures (see \cite{Li15b, LL16, Blu16}). In particular Blum proved the existence of minimizers without verifying the lower semicontinuity of $\hvol$ (but using the main estimates in this paper). However the quasi-monomial part and uniqueness part are still open in general.

\section{Appendix I: Candidate minimizer for D-type and E-type singularities}\label{app-DE}

\begin{enumerate}

\item
Consider the $(n+1)$-dimensional $D_{k+1}$ singularity:
\[
X^{n+1}=D^{n+1}_{k+1}:=\{z_1^2+\dots+z_n^2+z_{n+1}^2 z_{n+2}+z_{n+2}^k=0\}\subset\mathbb{C}^{n+2}.
\]
with $n\ge 1$ and $k\ge 3$.

Consider the valuation determined by $\bx=(x_1, \dots, x_n, x_{n+1}, x_{n+2})$ with $x_1\dots x_n$. Then
\[
A_X(v_{\bx})=\sum_{i=1}^{n+2}x_i-\min\{2x_1, 2x_{n+1}+x_{n+2}, k x_{n+2}\}, \vol(v_{\bx})=\frac{\min\{2x_1, 2x_{n+1}+x_{n+2}, kx_{n+2}\}}{\prod_{i=1}^{n+2} x_i}.
\]
The minimization of $\hvol(v_\bx)$ is a standard multivariable calculus problem. 
Although the complete discussion is messy, 
the end results are clean and are orgainized in the following table.

\vskip 2mm
\begin{center}
\renewcommand*{\arraystretch}{1.5}
\begin{tabular}{|ctc|c|c|c}
\hline
\diagbox[width=3em]{$k$}{${\scriptstyle n+1}$}
& $2$ & $3$ & $4$ & $\ge 5$ \\
\thickhline
$3$ & $(1,\frac{2}{3},\frac{2}{3})$     &    $(1,1,\frac{2}{3},\frac{2}{3})$      &  $(1, 1, 1, \frac{2}{3},\frac{2}{3})$  & $(1,\dots, 1, \frac{n-2}{n-1}, \frac{n-2}{n-1})$ \\
$k\ge 4$& $(1,\frac{k-1}{k},\frac{2}{k})$     &    $(1,1,\alpha_*(2),2-2\alpha_*(2))$      &  $(1,1,1,\alpha_*(3),2-2\alpha_*(3))$  & $(1, \dots, 1, \frac{n-2}{n-1}, \frac{n-2}{n-1})$ \\
\hline
\end{tabular}
\end{center}
\vskip 2mm
Here $\alpha_*(n)=\frac{-n+\sqrt{5n^2-4n}}{2(n-1)}$: $\alpha_*(2)\approx 0.732$, $\alpha_*(3)\approx 0.686$. Denoting the weights in the above table by $w_*$, we have the following cases of associated degenerations:
\begin{enumerate}
\item $k=3$ and $2\le n+1\le 5$, or $n+1=2$ and any $k$, the weight 
in the above table comes from a natural $\bC^*$-action preserving $X$. The corresponding normalized volume $\hvol(w_*)=\frac{((n-1)k+1)^{n+1}}{k^{n-1}(k-1)}$.
\item
For $n+1=3, 4$ and $k\ge 4$ (irrational $w_*$ with $\hvol(w_*)=\phi_2(\alpha_*)$), or $n+1=5$ and $k\ge 4$ ($w_*=(1, \dots, 1, \frac{2}{3}, \frac{2}{3})$ with $\hvol(w_*)=\frac{(3n-2)^{n+1}}{2\cdot 3^{n-1}}$), the corresponding weight ``degenerates" $X$ to the (non-isolated) singularity given by
\[
Y^{n+1}=\{z_1^2+\cdots+z_n^2+z^2_{n+1}z_{n+2}=0\}\subset \bC^{n+2}.
\]
\item
For any other case ($n+1\ge 6$ and $k\ge 3$), the weight $w_*=(1, \dots, 1, \frac{n-2}{n-1}, \frac{n-2}{n-1})$, with $\hvol(w_*)=2(n+1)^{n+1}\frac{(n-2)^{n-1}}{(n-1)^{n-1}}$, degenerates $X$ to the singularity:
\[
A^{n-1}_1\times \bC^2=\{z_1^2+\cdots+z_n^2=0\}\subset \bC^{n+2}.
\]
\end{enumerate}

\item
Consider the $(n+1)$-dimensional $E_7$ singularity:
\[
X^{n+1}=E^{n+1}_7:=\{z_1^2+z_2^2+\dots+z_n^2+z_{n+1}^3z_{n+2}+z_{n+2}^3=0\}\subset\bC^{n+2}.
\]
By renaming the variables, we can rearrange the weight $\bx$ such that $x_1\le x_2\le\dots\le x_n$. So the problem is to minimize the functional:
\[
\hvol(v_\bx)=\left(\sum_{i=1}^{n+2}x_i-\min\{2x_1, 3x_{n+1}+x_{n+2}, 3x_{n+2}\}\right)^{n+1}\frac{\min\{2x_1, 3x_{n+1}+x_{n+2}, 3x_{n+2}\}}{\prod_{i=1}^{n+2}x_i}
\]

We get:
\begin{enumerate}
\item $n\ge 5$. The unique minimizer is the weight $w_*=(1, \dots, 1, \frac{n-2}{n-1}, \frac{n-2}{n-1})$ with $\hvol(w_*)=2(n+1)^{n+1}\frac{(n-2)^{n-1}}{(n-1)^{n-1}}$, degenerating $X$ to 
$A^{n-1}_1\times \bC^2=\{z_1^2+\cdots+z_n^2=0\}\subset \bC^{n+2}$. Notice that $A^{n-1}_1\times \bC^2$ has a product Ricci-flat K\"{a}hler cone metric as explained above.
\item $n=4$. The unique minimizer is the weight $w_*=(1, 1, 1, 1, \frac{2}{3}, \frac{2}{3})$ with $\hvol(w_*)=\frac{50000}{27}$, degenerating $X$ to 
$A^{4}_{2}\times \bC^1=\{z_1^2+\cdots+z_4^2+z_6^3=0\}\subset \bC^6$ . Notice that $A^{4}_2\times \bC^1$ is semistable in the sense as explained above.
\item $n=3$. The unique minimizer is the weight $w_*=(1, 1, 1, \frac{5}{9}, \frac{2}{3})$,with $\hvol(w_*)=\frac{32000}{243}$, degenerating $X$ to $A^{3}_2\times \bC^1=\{z_1^2+z_2^2+z_3^2+z_5^3=0\}\subset\bC^5$. Notice that $A^3_2\times \bC^1$ has a product Ricci-flat K\"{a}hler cone metric as explained above.
\item $n=2$. The unique minimizer is the weight $w_*=(1,1,\frac{4}{9}, \frac{2}{3})$ with $\hvol(w_*)=\frac{250}{27}$, coming from the natural $\bC^*$-action.
\item $n=1$. The unique minimizer is the weight $(1,\frac{4}{9}, \frac{2}{3})$ with $\hvol(w_*)=\frac{1}{12}$, coming form the natural $\bC^*$-action.
\end{enumerate}

\item 
Consider the $(n+1)$-dimensional $E_6$ singularity:
\[
X^{n+1}=E^{n+1}_6:=\{z_1^2+\dots+z_n^2+z_{n+1}^3+z_{n+2}^4=0\}\subset\bC^{n+2}.
\]
Consider the weights $\bx$ such that $x_1\le\dots\le x_n$. The results are: 
\begin{enumerate}
\item
$n\ge 5$, the unique minimizer is $w_*=(1, \dots, 1, \frac{n-2}{n-1}, \frac{n-2}{n-1})$, degenerating $X$ to $A^{n-1}_1\times \bC^2$.
\item
$n=4$, the unique minimizer is $(1, 1, 1, 1, \frac{2}{3}, \frac{2}{3})$, degenerating $X$ to 
$A^4_2\times \bC^1=\{z_1^2+\cdots+z_4^2+z_5^3=0\}\subset \bC^6$.
\item
$n=3$, the unique minimizer is $(1, 1, 1, \frac{2}{3}, \frac{5}{9})$, degenerating $X$ to 
$A^3_2\times \bC^1=\{z_1^2+z_2^2+z_3^2+z_4^3=0\}\subset \bC^5$.
\item
$n=2$, the unique minimizer is $(1,1,\frac{2}{3}, \frac{1}{2})$ with $\hvol=\frac{343}{36}$, coming from the natural $\bC^*$-action.
\item
$n=1$, the unique minimizer is $(1, \frac{2}{3}, \frac{1}{2})$ with $\hvol=\frac{1}{6}$, coming from the natural $\bC^*$-action.

\end{enumerate}

\item
Consider the $(n+1)$-dimensional $E_8$ singularity:
\[
X^{n+1}=E^{n+1}_8:=\{z_1^2+z_2^2+\dots+z_n^2+z_{n+1}^3+z_{n+2}^5=0\}\subset \mathbb{C}^{n+2}.
\]
Consider the valuation determined by $\bx=(x_1,\dots, x_{n}, \alpha, \beta)$. Then
\[
A_X(v_{\bx})=\sum_{i=1}^n x_i+\alpha+\beta-\min\{2, 3\alpha, 5\beta\}, \quad \vol(v_{\bf x})=\frac{\min\{2x_1, 3\alpha, 5\beta \}}{\left(\prod_{i=1}^n x_i \right)\alpha\beta}.
\]
\begin{enumerate}
\item $n\ge 5$. $w_*=(1,\dots,1,\frac{n-2}{n-1}, \frac{n-2}{n-1})$, degenerating $X$ to $A^{n-1}_1\times\bC^2\subset \bC^{n+2}$.
\item $n=4$. $w_*=(1, 1, 1, 1, 2/3, 2/3)$, degenerating $X$ to $A^{4}_2\times \bC^1$.
\item $n=3$. $w_*=(1,1,1,2/3,5/9)$, degenerating $X$ to $A^3_2\times\bC^1$.
\item $n=2$. $w_*=(1,1,2/3,2/5)$ with $\hvol=\frac{2048}{225}$, coming from the natural $\bC^*$-action.
\item $n=1$. $w_*=(1,2/3,2/5)$ with $\hvol=\frac{1}{30}$, coming from the natural $\bC^*$-action.
\end{enumerate}

\end{enumerate}

\begin{rem}
In the A-D-E type examples, notice that if $\dim X=2$ then $\hvol(w_*)=\frac{4}{|G|}$, where $G$ is given by, $\bZ_{k}$ for $A^2_{k-1}$, binary dihedral group of order $4(k-1)$ for $D_{k+1}$, and
binary tetrahedral, octahedral, icosahedral groups for $E^2_6, E^2_7, E^2_8$ respectively. See \cite{LL16} for a general result for quotient singularities.
\end{rem}

\section{Appendix II: Second proof of Theorem \ref{thm-UIzumi} }\label{app-izumi}

In this appendix, we present a direct proof of Theorem \ref{thm-UIzumi} pointed out to me by an anonymous referee. This proof is more in the spiritual of \cite{Izu85} and is based on the argument of Boucksom-Favre-Jonsson in \cite[Section 3.1]{BFJ12} 
and the following  

\noindent
{\bf 
Fact: } (see \cite{Betal09}) For any smooth projective variety $X'$ and any ample line bundle $L\rightarrow X'$, there exists a positive constant 
$\epsilon>0$ such that for any $x\in X'$, the divisor $\pi^*L-\epsilon E$ is ample where $\pi$ is the blow-up at $x$ and $E$ is the exceptional divisor of $\pi$. 

First, by compactifying $X'$, we can assume $X'$ is projective and $L$ is a very ample line bundle over $X'$. Moreover we can assume $X'$ is smooth and $\mu^{-1}(o)$ is a connected simple normal crossing divisor (not necessarily reduced) whose reduced support is given by $\sum_{i=1}^m F_i$. Indeed, the connectedness follows from Zariski's main theorem. Moreover we can take a log resolution of $(X', \mu^{-1}(o))$ and the uniform estimate on the log resolution is easily seen to imply the uniform estimate for $(X', \mu^{-1}(o))$. 
For any $o'\in \mu^{-1}(o)$, let $\pi_{o'}: Bl_{o'}X'\rightarrow X'$ be the blow-up of $o'$ with the exceptional divisor denoted by $E_0$. By the above fact, we can choose $\epsilon$ sufficiently small so that $M=M_{o'}=\pi_{o'}^*L-\epsilon E_0$ is ample for any $o'\in \mu^{-1}(o)$. 
\begin{rem}
Although we don't need this, under the log-smoothness assumption, the dual complex $\Delta$ of $\mu^{-1}(o)$ is connected and the dual complex $\Delta_{o'}$ of $(\pi_{o'}\circ\mu)^{-1}(o)$ is obtained by either attaching a new segment at a vertex of $\Delta$ or taking a barycenter subdivision of a face of $\Delta$. 
\end{rem}
We will denote by $E_i$ the strict transform of $F_i$ under the blow up $\pi_{o'}$. Assume $g\in \mathcal{O}_{X, o}$ and let $G=\{\mu^*g=0\}$ be the effective divisor. We write
\[
G=b_0 E_0+\sum_{i=1}^m b_i E_i+\tilde{G}=\sum_{i=0}^{m}b_i E_i + \tilde{G}
\]
where $\tilde{G}$ is an effective divisor whose support does not contain any $E_i$. Notice that we have
\begin{equation}
b_i=\ord_{F_i}(\mu^*g)=\ord_{F_i}(g), \text{ for } 0\le i\le m.
\end{equation} 
In particular, we have $b_0=\ord_{E_0}(g)=\ord_{o'}(\mu^* g)$. Next consider the intersection:
\begin{eqnarray}\label{eq-intineq}
\sum_{j=0}^m b_j (E_i\cdot E_j\cdot M^{n-2})&=&(G\cdot E_i\cdot M^{n-2})-(\tilde{G}\cdot E_i\cdot M^{n-2})\nonumber\\
&\le& (G\cdot E_i\cdot M^{n-2})=0.
\end{eqnarray}
The last identity is because $G$ is a principle divisor near $E_i$. Set 
\[
c_{i,j}:=(E_i\cdot E_j\cdot M^{n-2}).
\]
Then from \eqref{eq-intineq} we get the inequality:
\[
\sum_{j\neq i} b_j c_{i,j}\le - b_i c_{i,i}\le b_i |c_{i,i}|.
\]
Without the loss of generality, we assume $E_0\cap E_1\neq \emptyset$. Then $c_{i,j}\ge 0$ if $j\neq i$ and 
\begin{align*}
&c_{1,1}=(E_1\cdot E_1\cdot (\pi_{o'}^*L-\epsilon E_0)^{n-2})=(F_1\cdot F_1\cdot L^{n-2})-\epsilon^{n-2},\\
&c_{0, 1}=(E_0\cdot E_1\cdot (\pi_{o'}^*L-\epsilon E_0)^{n-2})=\epsilon^{n-2}>0.
\end{align*}
So we get $b_0 \le \frac{|c_{1,1}|}{c_{0,1}} b_1$ which is equivalent to:
\begin{equation}
\ord_{o'}(\mu^* g)\le \frac{|c_{1,1}|}{c_{0,1}} \ord_{F_1}(g).
\end{equation}
By the original Izumi's theorem, we know that for each $i$ there exists a constant $d_i>0$ such that
\[
\ord_{F_i}(\mu^* g)\le d_i\cdot \ord_o(g).
\] 
So if we choose
\[
a_2=\max\left\{d_i \frac{\left|(F_i\cdot F_i\cdot L^{n-2})-\epsilon^{n-2}\right|}{\epsilon^{n-2}}, 1\le i\le m\right\},
\]
then we have the desired inequality:
\[
\ord_{o'}(\mu^*g)\le a_2\cdot \ord_o(g), \text{ for any } g\in \mathcal{O}_{X,o}.
\]

\section{Acknowledgment}
The author is partially supported by NSF DMS-1405936. I would like to thank Yuchen Liu, Xiaowei Wang and Chenyang Xu for helpful discussions, and Gang Tian and Bernd Ulrich for their interest. I am grateful to Sebastien Boucksom and Mattias Jonsson for pointing out to me the existence of valuations with infinite log discrepancies. I also thank M. Jonsson for helping me realize the issue in Remark \ref{remcont}. 
I also would like to thank Yuji Odaka for helpful comment regarding the example 2.8. I am grateful to an anonymous referee for providing constructive suggestions on the presentation of paper and pointing out the second proof of Theorem \ref{thm-UIzumi} in Appendix II.

\vskip 1mm
\noindent
Department of Mathematics, Purdue University, West Lafayette, IN 47907-2067 USA

\noindent
li2285@purdue.edu

\end{document}